\pgfplotsset{compat=1.12}
\DeclarePairedDelimiter\abs{\lvert}{\rvert}
\DeclarePairedDelimiter\norm{\lVert}{\rVert}
\let\oldabs\abs
\def\abs{\@ifstar{\oldabs}{\oldabs*}}
\let\oldnorm\norm
\def\norm{\@ifstar{\oldnorm}{\oldnorm*}}
\newtheorem{theorem}{Theorem}
\newtheorem{lemma}[theorem]{Lemma}
\newtheorem{proposition}[theorem]{Proposition}
\theoremstyle{definition}
\theoremstyle{remark}
\newtheorem*{remark}{Remark}
\newtheorem*{remarks}{Remarks}
\numberwithin{theorem}{section}
\numberwithin{proposition}{section}
\numberwithin{lemma}{section}
\numberwithin{corollary}{section}
\numberwithin{equation}{section}
\numberwithin{conjecture}{section}
\setlist[enumerate,1]{before=}
\newcommand{\N}{\mathbb{N}}
\newcommand{\Z}{\mathbb{Z}}
\newcommand{\C}{\mathbb{C}}
\def\H{\mathbb{H}}
\begin{document}

\title[Bivariate asymptotics for eta-theta quotients with simple poles]{Bivariate asymptotics for eta-theta quotients with simple poles}

\author{Giulia Cesana}
\author{Joshua Males}
\address{University of Cologne, Department of Mathematics and Computer Science, Division of Mathematics, Weyertal 86-90, 50931 Cologne, Germany}
\email{gcesana@math.uni-koeln.de}
\email{jmales@math.uni-koeln.de}

\begin{abstract}
	We employ a variant of Wright's circle method to determine the bivariate asymptotic behaviour of Fourier coefficients for a wide class of eta-theta quotients with simple poles in $\H$.
\end{abstract}

\maketitle

\section{Introduction}
Jacobi forms are ubiquitous throughout number theory and beyond. For example, they appear in string theory \cite{males2020asymptotic,RUSSO1996131}, the theory of black holes \cite{dabholkar2012quantum}, and the combinatorics of partition statistics \cite{bringmann2016dyson}. The Fourier coefficients of Jacobi forms often encode valuable arithmetic information. To describe a motivating example, let $\lambda$ be a partition of a positive integer $n$, i.e.\@ a list of non-increasing positive integers $\lambda_j$ with $1\leq j \leq s$ that sum to $n$. The \emph{rank} \cite{dyson} of $\lambda$ is given by the largest part minus the number of parts, and the \emph{crank} \cite{AndGa} of $\lambda$ is given by \begin{align*}
\begin{cases*}
\ell(\lambda) & \text{ if $\lambda$ contains no ones}, \\
\mu(\lambda) - \omega(\lambda) & \text{ if $\lambda$ contains ones}. 
\end{cases*}
\end{align*}
Here, $\ell(\lambda)$ denotes the largest part of $\lambda$, $\omega(\lambda)$ denotes the number of ones in $\lambda$, and $\mu(\lambda)$ denotes the number of parts greater than $\omega(\lambda)$. We denote by $M(m,n)$ (resp.\@ $N(m,n)$) the number of partitions of $n$ with crank $m$ (resp. rank $m$). It is well-known that the generating function of $M$ is given by (see \cite[equation (2.1)]{bringmann2016dyson})
\begin{align*}
 \sum_{\substack{n \geq 0 \\ m \in \Z}} M(m,n) \zeta^m q^n = \frac{i\left(\zeta^\frac 12-\zeta^{-\frac 12}\right) q^{\frac{1}{24}}\eta^2(\tau)}{\vartheta(z;\tau)},
\end{align*}
which is a Jacobi form and where $\zeta \coloneqq e^{2 \pi i z}$ for $z \in \C$, and $q \coloneqq e^{2 \pi i \tau}$ with $\tau \in \H$, the upper half-plane throughout. Here, the Dedekind $\eta$-function is given by
\begin{align*}
	\eta(\tau) \coloneqq q^{\frac{1}{24}} \prod_{n \geq 1} \left( 1-q^n \right),
\end{align*}
and the Jacobi theta function is defined by
\begin{equation*}
	\vartheta(z;\tau) \coloneqq i \zeta^{\frac{1}{2}} q^{\frac{1}{8}} \prod_{n\geq 1} (1-q^n)\left(1-\zeta q^n\right)\left(1-\zeta^{-1} q^{n-1}\right).
\end{equation*}

Note that a similar formula can be found for the generating function of $N$ as a mock Jacobi form involving an eta-theta quotient. In general Jacobi forms have a Fourier expansion of the form 
\begin{align} \label{Fourier expansion Jacobi form}
\sum_{\substack{n \geq 0 \\ m \in \Z}} a(m,n) \zeta^m q^n.
\end{align}
Classical results in the theory of modular forms (see \cite[Theorem 15.10]{bringmann2017harmonic} for example) give the asymptotic behaviour of the coefficients $a(m,n)$ of Jacobi forms similar to \eqref{Fourier expansion Jacobi form} for fixed $m$.

Many interesting examples of Jacobi forms arise as quotients of $\eta$ and $\vartheta$ functions. As an illuminating example, for $a_k, b_j \in \N$ and $n \in \Z$, consider the study of generalized theta quotients \cite[equation (13)]{thetablocks},
\begin{align*}
	\frac{\vartheta(a_1 z;\tau) \vartheta(a_2 z;\tau) \cdots \vartheta(a_k z:\tau)}{\vartheta(b_1 z;\tau) \vartheta(b_2 z;\tau) \cdots \vartheta(b_j z:\tau)} \eta(\tau)^n
\end{align*} 
which provide new constructions of Jacobi and Siegel modular forms. As highlighted by Gritsenko, Skoruppa, and Zagier, theta blocks also have deep applications to areas such as Fourier analysis over infinite-dimensional Lie algebras and the moduli spaces in algebraic geometry. In the present paper, we obtain the bivariate asymptotic behavior of the coefficients of a prototypical family of such generalized theta blocks, while the steps presented here also offer a pathway to obtain similar results for more general families.

In \cite{bringmann2016dyson} Bringmann and Dousse pioneered the use of new techniques in the study of the bivariate asymptotic behaviour of the Fourier coefficients and applied them to the partition crank function. In \cite{dousse2014asymptotic} Dousse and Mertens used these techniques to study the rank function. In particular, each of these papers used an extension of Wright's circle method \cite{wright1934asymptotic,wright1971stacks} to obtain bivariate asymptotics of  $N(m,n)$ and $M(m,n)$, with $m$ in a certain range depending on $n$.

Recently, the second author of the present paper extended these techniques to an example appearing in the partition function for entanglement entropy in string theory. In particular, \cite{males2020asymptotic,males2021asymptoticcorrigendum} considered the eta-theta quotient
\begin{equation}\label{eqn: entropy fn}
\frac{\vartheta(z;\tau)^4}{\eta(\tau)^9 \vartheta(2z;\tau)} =: \sum_{\substack{m \in \Z \\ n \geq 0}} b(m,n) \zeta^m q^m
\end{equation}
with a simple pole at $z= \frac{1}{2}$. Then bivariate asymptotic behaviour of the coefficients $b(m,n)$ is given by \cite[Theorem 1.1]{males2021asymptoticcorrigendum}.
\begin{theorem}
	For $\beta \coloneqq \pi \sqrt{\frac{2}{n}}$ and $|m| \leq \frac{1}{6 \beta} \log(n)$ we have that
	\begin{equation*}
	b(m,n) =(-1)^{m+ \delta + \frac{3}{2}} \frac{\beta^6 m}{  8 \pi^5  (2n)^{\frac{1}{4} }} e^{2 \pi \sqrt{2n}} + O \left( m n^{-\frac{15}{4}} e^{2 \pi \sqrt{2n}} \right)
	\end{equation*}
	as $n \rightarrow \infty$. Here, $\delta \coloneqq 1$ if $m <0$ and $\delta = 0$ otherwise.
\end{theorem}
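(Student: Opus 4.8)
The plan is to run Wright's circle method in the two variables $z$ and $\tau$ simultaneously, the new ingredient being the treatment of the simple pole.

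\emph{Step 1 (set-up).} Writing $\zeta=e^{2\pi iz}$ and $q=e^{2\pi i\tau}$ as in the excerpt, I would begin from Cauchy's formula
\begin{equation*}
b(m,n)=\frac{1}{(2\pi i)^{2}}\oint\!\!\oint \frac{\vartheta(z;\tau)^{4}}{\eta(\tau)^{9}\,\vartheta(2z;\tau)}\,\zeta^{-m-1}q^{-n-1}\,d\zeta\,dq ,
\end{equation*}
with the $q$-circle of radius $e^{-\beta}$ and with $z$ running over a horizontal segment of length one at height $v$, $0<v<\tfrac{\im\tau}{2}$, so that the $\zeta$-contour separates the pole locus $z\equiv\tfrac12$ from the loci $z\equiv\tfrac\tau2,\tfrac{1+\tau}{2}$ that are also produced by the zeros of $\vartheta(2z;\tau)$. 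The choice $\beta=\pi\sqrt{2/n}$ is forced: it is the unique radius making $\tau=\tfrac{i\beta}{2\pi}$ a saddle, balancing $q^{-n}=e^{\beta n}$ against the modular blow-up of $\vartheta,\eta$ as $\tau\to0$, and the two together produce $e^{\beta n}e^{2\pi^{2}/\beta}=e^{2\pi\sqrt{2n}}$.

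\emph{Step 2 (peeling off the poles).} Set $F(z;\tau)=\vartheta(z;\tau)^{4}/(\eta(\tau)^{9}\vartheta(2z;\tau))$. Since $\vartheta(2z;\tau)$ has a simple zero at each nonzero two-torsion point with $\tfrac{\partial}{\partial z}\vartheta(2z;\tau)\big|_{z=1/2}=4\pi\eta(\tau)^{3}$, the residue of $F$ at $z=\tfrac12$ is $\vartheta(\tfrac12;\tau)^{4}/(4\pi\eta(\tau)^{12})$, which by $\vartheta(\tfrac12;\tau)=-2\eta(2\tau)^{2}/\eta(\tau)$ is the eta-quotient $4\eta(2\tau)^{8}/(\pi\eta(\tau)^{16})$; the residues at $z=\tfrac\tau2,\tfrac{1+\tau}{2}$ are similar eta-theta expressions and the three residues sum to zero. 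I would then write $F$ as the sum of the periodised (elliptic) polar parts attached to these three poles plus a genuinely holomorphic Jacobi form, whose coefficients obey the classical (smaller) asymptotics. Doing the $\zeta$-integral against the polar part is where the shape of the statement appears: the pole at $z=\tfrac12$ sits at $\zeta=-1$, so $\zeta^{-m}$ evaluates to $(-1)^{m}$; for $m<0$ the $\zeta$-contour must first be pushed past $\lvert\zeta\rvert=1$ and deformed around a lattice translate of the pole, giving the extra $\delta$ in the exponent; and the linear factor $m$ emerges from the combined contribution of the three simple poles, whose leading $(2n)^{-11/4}$-size terms largely cancel and leave a residual of relative size $\beta m$.

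\emph{Step 3 (modular inversion and Laplace's method).} Using $\eta(-1/\tau)=\sqrt{-i\tau}\,\eta(\tau)$ and $\vartheta(z/\tau;-1/\tau)=-i\sqrt{-i\tau}\,e^{\pi iz^{2}/\tau}\vartheta(z;\tau)$, the quadratic exponent $4\pi iz^{2}\tau'$ from $\vartheta(z;\tau)^{4}$ cancels the identical exponent from $\vartheta(2z;\tau)$, and one gets $F(z;\tau)=i(-i\tau')^{-3}F(z\tau';\tau')$ with $\tau'=-1/\tau$; the same transformations govern the eta-quotients coming from the residues. I would feed this into the remaining one-dimensional $q$-integral and evaluate the major arc near $\tau=\tfrac{i\beta}{2\pi}$ by Laplace's method — the phase has second derivative of size $(2n)^{3/2}$, hence a Gaussian factor of size $(2n)^{-3/4}$ — so that the weight powers of $\tau'$, the Gaussian factor, and the cusp expansion of the residue eta-quotients assemble into $\dfrac{\beta^{6}m}{8\pi^{5}(2n)^{1/4}}e^{2\pi\sqrt{2n}}$ together with the sign $(-1)^{m+\delta+3/2}$.

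\emph{The main obstacle} will be uniformity in $m$. Since the claimed main term itself grows linearly in $m$, every error contribution — the minor-arc bound for the $q$-integral, the bound on the holomorphic Jacobi piece and on the discarded higher-order pole contributions, and the precise estimate of the residual left by the cancellation in Step~2 — must be shown to be $O(mn^{-15/4}e^{2\pi\sqrt{2n}})$ \emph{uniformly} for $\lvert m\rvert\le\tfrac{1}{6\beta}\log n$. That cutoff is precisely what keeps the factors $e^{O(m\beta)}$ generated along the way (e.g.\ from $q^{\pm m/2}$) subpolynomial in $n$ and keeps the companion-pole and geometric-series tails below the error bound; allowing $\lvert m\rvert$ larger would destroy this. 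Relative to the univariate case the difficulty is both that a second variable has to be tracked through every circle-method estimate and that the leading term survives only after a delicate cancellation, which is why the power saving in the error term is merely $n^{-1/2}$.
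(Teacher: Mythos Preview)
This theorem is not proved in the present paper; it is quoted from the second author's earlier work as a motivating example, and the paper instead proves the more general Theorem~\ref{Theorem: main}. Nonetheless, the method the paper develops (and which the cited papers use) can be compared to your outline, and the two are genuinely different.

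The paper does \emph{not} pass through a Dabholkar--Murthy--Zagier polar/holomorphic decomposition. Instead it (i) defines the $\zeta^m$-coefficient $f_m(\tau)$ as a \emph{principal-value} integral of $f(z;\tau)e^{-2\pi i m z}$ over the real segment $[0,1]$, indenting by semicircles around the real poles; the two half-residue contributions cancel because the poles are simple (Lemma~\ref{Theorem: integral over segment of circle}); (ii) uses the reflection $f(1-z;\tau)=\pm f(z;\tau)$ to reduce to an integral against $\sin(2\pi m z)$; (iii) applies the modular inversions of $\eta$ and $\vartheta$ to the integrand near $q=1$ and evaluates the resulting one-dimensional $z$-integral explicitly via error-function asymptotics; and (iv) runs Wright's circle method in $q$ alone. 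The companion poles at $z=\tfrac{\tau}{2},\ \tfrac{1+\tau}{2}$ never enter: they lie off the real contour and play no role in the principal-value Fourier coefficient.

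Your Step~2 is the questionable part. The fact that the three residues sum to zero is simply the ellipticity of $F$; it does not by itself produce the factor $m$, and the poles at $\tfrac{\tau}{2},\ \tfrac{1+\tau}{2}$ contribute Fourier pieces carrying $q^{\pm m/2}$ rather than terms that cancel against the $z=\tfrac12$ contribution at leading order. In the paper's framework the factor $m$ has a different origin: for this particular quotient the quadratic exponents $e^{-\pi i z^2/\tau}$ from $\vartheta(z;\tau)^4$ and $\vartheta(2z;\tau)$ cancel exactly (as you observe in Step~3), so after inversion the $z$-integrand has no Gaussian in $z$; the leading $z$-integral then reduces to a hyperbolic expression times $\sin(2\pi m z)$, and it is the $\sin$-factor, not any residue cancellation, that forces the main term to be odd in $m$ and proportional to $m$. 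Your Steps~1 and~3 are compatible with the paper's route, but Step~2 as written would need to be replaced by this direct evaluation of the principal-value $z$-integral.
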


The current paper serves to extend these results to a large family of eta-theta quotients with multiple simple poles\footnote{A similar framework exists for those without poles by simply extending the results of \cite{bringmann2016dyson,dousse2014asymptotic}.}. Such eta-theta quotients appear in numerous places. For example, investigations into Vafa-Witten invariants \cite[equation (2.5)]{alexandrov2020vafawitten} involve the functions
\begin{align}\label{eqn: VW}
\frac{i}{\eta(\tau)^{N-1} \vartheta(2z;\tau)},
\end{align}
which also appear in investigations into the counting of BPS-states via wall-crossing \cite[equation (5.114)]{Wot}. The asymptotics of this family of functions was studied in \cite{BriMa}. Other examples of similar shapes also arise as natural pieces of functions in investigations into BPS states, see e.g. \cite[Section 5.6.2 ]{Wot}.

Throughout, we consider an eta-theta quotient of the form
\begin{equation*}
f(z;\tau) \coloneqq \prod_{j=1}^{N} \eta(a_j \tau)^{\alpha_j} \frac{\vartheta( z;  \tau)}{\vartheta(b z; c \tau)},
\end{equation*}
where $a_j, b, c \in \N$ and $\alpha_j \in \Z$. Since we require asymptotic growth, we assume that $\sum_{j=1}^N\frac{\alpha_j}{a_j} < 0$. We omit the dependency on these parameters for notational ease. We assume that $b$ is even and $b^2>c$, and indicate the differences that would occur if $b$ were odd. In the language of \cite{thetablocks}, this is a family of generalized theta quotients.

\begin{remarks}
	\begin{enumerate}[wide, labelwidth=!, labelindent=0pt]
		\item The exposition presented here may be easily generalized to include products of theta functions in both the numerator and denominator of $f$, although this becomes lengthy to write out for the general case.
		
		\item We include a theta function in the numerator to allow us to assume that there are no poles of $f$ at the lattice points $0,1$. However, using the techniques presented here and shifting integrals to not have end-points at $0,1$ a similar story holds for functions without a theta function in the numerator.
	\end{enumerate}
\end{remarks}

We define the coefficients $c(m,n)$ by
\begin{equation*}
f(z;\tau) \eqqcolon \sum_{\substack{n \geq 0 \\ m \in \Z}} c(m,n) \zeta^m q^n,
\end{equation*}
for some $z$ in a small neighborhood of $0$ that is pole-free, and investigate their bivariate asymptotic behaviour. 
To this end, we employ and extend the techniques of \cite{bringmann2016dyson}, which also appear in \cite{dousse2014asymptotic,males2020asymptotic,males2021asymptoticcorrigendum}, using Wright's circle method to arrive at the following theorem. 

\begin{theorem}\label{Theorem: main}
	Define $\beta=\beta(n)\coloneqq\pi\sqrt{\frac{2}{n}}$, along with
	\begin{align*}
			\Lambda_1  &\coloneqq (-1)^{\frac 32+\sum\limits_{j=1}^{N}\alpha_j}  \left(\frac{1}{2\pi}\right)^{-\frac{1}{2}\sum\limits_{j=1}^{N}\alpha_j}  \frac{c^{\frac 12}}{4i\pi^2 \left(\frac{2b^2}{c}-1-\frac bc\right)} \prod\limits_{j=1}^{N}a_j^{-\frac{\alpha_j}{2}},
	\end{align*}
	and 
	\begin{align*}
		\Lambda_2  &\coloneqq  \frac{b^2}{c}-\frac bc+\frac{1}{4c}-\frac 14-\sum\limits_{j=1}^N\frac{\alpha_j}{12 a_j} .
	\end{align*}
	Assume that $0<1-\sum_{j=1}^{N} \frac{\alpha_j}{12 a_j }<\sqrt{\Lambda_2 }$, and $m=m(n)$ with $|m|\leq \frac{1}{6\beta}n^{-\delta} \log(n)$ for some small $\delta>0$ such that $m\to \infty$ as $n\to \infty$. Then
	\begin{align*}
		c(m,n) = \frac{-i}{2\pi} \Lambda_1  \beta^{2-\frac 12\sum\limits_{j=1}^N\alpha_j} \sqrt{\Lambda_2 }^{-\frac 12 \sum\limits_{j=1}^N \alpha_j } \frac{e^{2 \pi \sqrt{2\Lambda_2  n}}}{2 \pi \left(2\Lambda_2  n\right)^{\frac 14} } + O \left(\beta^{3-\frac 12\sum\limits_{j=1}^N\alpha_j} \frac{e^{2 \pi \sqrt{2\Lambda_2  n}}}{2 \pi \left(2\Lambda_2  n\right)^{\frac 14} }\right)
	\end{align*}
as $n\rightarrow\infty$. 
\end{theorem}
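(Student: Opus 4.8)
The plan is to follow Wright's circle method in the bivariate form developed in \cite{bringmann2016dyson,males2021asymptoticcorrigendum}. Write $c(m,n)$ as a contour integral over a circle of radius $e^{-\beta}$ in the $q$-variable, with $\beta = \pi\sqrt{2/n}$ chosen to be the saddle point, and extract the $\zeta^m$-coefficient by a further integral in the $z$-variable along a horizontal segment through a pole-free neighbourhood of $0$. The first step is to understand the behaviour of $f(z;\tau)$ as $\tau \to 0$. Using the modular transformations of $\eta$ and $\vartheta$ (the latter picking up the pole structure of $1/\vartheta(bz;c\tau)$), one expresses $f$ near $\tau = 0$ in terms of its dominant singularity; the $\eta$-product contributes the exponential growth rate, governed by $\sum_j \alpha_j/(12 a_j)$, together with the shift coming from the theta quotient — this is precisely what assembles into the constant $\Lambda_2$, so that the main exponential becomes $e^{2\pi\sqrt{2\Lambda_2 n}}$ rather than $e^{2\pi\sqrt{2n}}$. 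The constant $\Lambda_1$ collects the multiplicative constants from all these transformations (the root of unity $(-1)^{3/2 + \sum \alpha_j}$, the powers of $2\pi$, the factor $c^{1/2}$, and $\prod_j a_j^{-\alpha_j/2}$) together with the leading Laurent coefficient of $1/\vartheta$ at its simple pole.

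Next I would split the $q$-contour into a major arc near $q = e^{-\beta}$ and a minor arc, and show the minor arc contributes only to the error term; this uses the standard bound that $|f|$ is strictly smaller away from the dominant cusp, which follows from the assumption $b$ even, $b^2 > c$, and the sign condition $\sum_j \alpha_j/a_j < 0$ ensuring genuine growth. On the major arc, one inserts the asymptotic expansion of $f$ near $\tau = 0$. The $z$-integral then localizes: expanding $1/\vartheta(bz;c\tau)$ about its pole and using that $|m|$ is small (the bound $|m| \le \frac{1}{6\beta} n^{-\delta}\log n$) lets one replace $e^{-2\pi i m z}$ and the analytic part of $f$ by their leading behaviour, producing the factor $\beta$ coming from the residue contribution at the simple pole and the powers $\beta^{-\frac12 \sum \alpha_j}$ from the modular weight. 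What remains is a one-variable $q$-integral of the shape $\int e^{A/\beta + B\beta}(\cdots)\, d\beta$ which is evaluated by the saddle-point method / a Bessel-function asymptotic exactly as in \cite[Proposition]{bringmann2016dyson}; this yields the $\frac{e^{2\pi\sqrt{2\Lambda_2 n}}}{(2\Lambda_2 n)^{1/4}}$ factor and, after collecting the powers of $\beta$, the claimed main term with its prefactor $\frac{-i}{2\pi}\Lambda_1 \beta^{2 - \frac12\sum\alpha_j}\sqrt{\Lambda_2}^{-\frac12\sum\alpha_j}$.

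The main obstacle, I expect, is the bookkeeping around the pole of $1/\vartheta(bz;c\tau)$: one must carefully track which lattice points $z = \frac{k}{2} + \frac{c\ell}{2b}\tau$ give poles inside the chosen $z$-contour, verify (using $b$ even, $b^2>c$) that the dominant such pole is simple and isolated, and compute its residue with the correct root of unity and sign $(-1)^{3/2+\sum\alpha_j}$ — the half-integer power of $-1$ signalling the usual ambiguity from $\vartheta$'s square-root factor, which must be pinned down consistently through the transformation. A secondary technical point is confirming that the condition $0 < 1 - \sum_j \alpha_j/(12a_j) < \sqrt{\Lambda_2}$ is exactly what guarantees the growth from the pole term dominates the growth from the "no-pole" part of $f$, so that the stated main term is genuinely the leading asymptotic; this requires comparing the two competing exponential rates $2\pi\sqrt{2(1 - \sum\alpha_j/(12a_j))n}$ and $2\pi\sqrt{2\Lambda_2 n}$. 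Everything else — the minor-arc estimate, the saddle-point evaluation, and the error term $O(\beta^{3-\frac12\sum\alpha_j}\cdots)$ — is routine given the machinery already in the literature.
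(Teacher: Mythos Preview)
Your high-level outline --- Wright's circle method, modular transformation of $f$ near $\tau\to 0$ to produce the exponent $\Lambda_2$, major/minor arc split, and a Bessel-type saddle on the major arc --- matches the paper. The genuine gap is in how you propose to handle the $z$-integral.

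You say the $z$-integral ``localizes'' at a pole of $1/\vartheta(bz;c\tau)$ and that the extra power of $\beta$ together with the denominator in $\Lambda_1$ come from the \emph{residue} there. That is not the mechanism, and it is hard to see how it could be. The coefficient $f_m(\tau)$ has to come from an integral over a full period $[0,1]$ in $z$, which unavoidably meets the real poles $h_\ell=k/b$; the paper takes the \emph{principal value} (averaging semicircles $\gamma^\pm_{\ell,r}$ above and below each pole), and because the poles are simple the residue pieces satisfy $G^+_{\ell,r}+G^-_{\ell,r}\to 0$ and cancel completely. What survives, after the modular inversion, is a bona fide integral
\[
\int_0^1 e^{\mathcal H_2 z^2+\mathcal H_1 z}\sin(2\pi mz)\,dz,\qquad \mathcal H_1=\tfrac{2\pi^2}{\varepsilon}\!\left(1-\tfrac{b}{c}\right),\quad \mathcal H_2=\tfrac{2\pi^2}{\varepsilon}\!\left(\tfrac{b^2}{c}-1\right),
\]
which is computed in closed form via error functions. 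The leading term then comes from the asymptotic $\operatorname{erf}(iw)\sim \tfrac{i}{\sqrt{\pi}}\,e^{w^2}/w$ evaluated at the \emph{endpoint} $z=1$: this is precisely what yields the factor $\bigl(4i\pi^2(\tfrac{2b^2}{c}-1-\tfrac{b}{c})\bigr)^{-1}$ inside $\Lambda_1$ and the additional power $\varepsilon\sim\beta$. No pole of $1/\vartheta$ contributes to the main term at all; the poles only dictate how $f_m$ must be \emph{defined}, not where its size comes from. (A smaller point: your interpretation of the hypothesis $1-\sum_j\alpha_j/(12a_j)<\sqrt{\Lambda_2}$ is also off. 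The error-arc bound has the shape $\exp\!\bigl(\pi\sqrt{2n}\,(1-\sum_j\alpha_j/(12a_j))\bigr)$, with that quantity appearing linearly, to be compared against $\exp\!\bigl(2\pi\sqrt{2\Lambda_2 n}\bigr)$; it is not a comparison of two expressions of the form $2\pi\sqrt{2(\cdot)n}$.)
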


\begin{remark}
Note that the restriction on $\Lambda_2 $ still leaves infinitely many choices.
\end{remark}

The paper is structured as follows. We begin in Section \ref{Section: prelims} by recalling relevant results that are pertinent to the rest of the paper. Section \ref{Section: asymptotic behaviour of f} deals with defining the Fourier coefficients of $\zeta^m$ of $f$. In Section \ref{Section: towards dominant pole} we investigate the behaviour of $f$ toward the dominant pole $q = 1$. We follow this in Section \ref{Section: away from domminant pole} by bounding the contribution away from the pole at $q = 1$. In Section \ref{Section: Circle method} we obtain the asymptotic behaviour of $c(m,n)$ and hence prove Theorem \ref{Theorem: main}. 

\section*{acknowledgments}
The authors would like to thank Kathrin Bringmann, Andreas Mono, and Larry Rolen for many helpful comments on an earlier version of the paper. 


\section{Preliminaries}\label{Section: prelims}
Here we recall relevant definitions and results which will be used throughout the rest of the paper.

\subsection{Properties of $\vartheta$ and $\eta$}
When determining the asymptotic behaviour of $f$ we require the modular properties of both $\vartheta$ and $\eta$. It is well-known that $\vartheta$ is a Jacobi form (see e.g. \cite{mumford2007tata}).
\begin{lemma}\label{Lemma: transformation of theta}
	The function $\vartheta$ satisfies
	\begin{align*}
	\vartheta(z;\tau) = -\vartheta(-z ; \tau), \qquad \vartheta(z;\tau) = -\vartheta(z+1;\tau),\qquad \vartheta(z; \tau) = \frac{i}{\sqrt{-i \tau}} e^{ \frac{- \pi i z^2}{\tau}} \vartheta\left( \frac{z}{\tau} ; -\frac{1}{\tau} \right).
	\end{align*}
\end{lemma}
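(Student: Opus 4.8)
\textbf{Plan of proof for Lemma \ref{Lemma: transformation of theta}.}
The three claimed identities are, respectively, the oddness of $\vartheta$, its quasi-periodicity under $z\mapsto z+1$, and the modular (Jacobi) transformation under $\tau\mapsto-1/\tau$. The plan is to prove the first two directly from the product expansion
\begin{equation*}
\vartheta(z;\tau) = i \zeta^{\frac{1}{2}} q^{\frac{1}{8}} \prod_{n\geq 1} (1-q^n)\left(1-\zeta q^n\right)\left(1-\zeta^{-1} q^{n-1}\right),
\end{equation*}
and to prove the third by the classical route: rewrite $\vartheta$ as a theta series via the Jacobi triple product, then apply Poisson summation (equivalently, invoke the transformation law of the Jacobi theta function $\theta_1$). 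I expect the modular transformation to be the only genuinely substantive step; the first two are short manipulations.

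First I would record the series form. By the Jacobi triple product, the product above equals
\begin{equation*}
\vartheta(z;\tau) = \sum_{\nu\in\Z} (-1)^{\nu} \zeta^{\nu+\frac12} q^{\frac12\left(\nu+\frac12\right)^2} = -i\sum_{\nu\in\Z}(-1)^{\nu} e^{\pi i \tau\left(\nu+\frac12\right)^2} e^{2\pi i z\left(\nu+\frac12\right)},
\end{equation*}
up to the elementary bookkeeping of the prefactor $i\zeta^{1/2}q^{1/8}$; this is the standard $\theta_1$-series normalization and I would cite \cite{mumford2007tata}. From the series, $\vartheta(-z;\tau)$ is obtained by sending $\nu\mapsto-\nu-1$ in the sum, which flips the overall sign (the $(-1)^\nu$ contributes $(-1)^{-\nu-1}=-(-1)^\nu$ while the exponentials are symmetric), giving $\vartheta(z;\tau)=-\vartheta(-z;\tau)$. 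For the shift $z\mapsto z+1$, each factor $e^{2\pi i z(\nu+1/2)}$ picks up $e^{2\pi i(\nu+1/2)}=e^{\pi i}=-1$ — uniformly in $\nu$, since $\nu+\tfrac12$ has odd numerator — so $\vartheta(z+1;\tau)=-\vartheta(z;\tau)$. (Equivalently, from the product: replacing $\zeta$ by $\zeta e^{2\pi i}=\zeta$ in the infinite product changes nothing, while $\zeta^{1/2}\mapsto-\zeta^{1/2}$, yielding the sign.)

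For the third identity I would apply Poisson summation to $g(x)\coloneqq e^{\pi i \tau x^2}e^{2\pi i z x}$ over the coset $\Z+\tfrac12$, or equivalently write $F(w)\coloneqq\sum_{\nu\in\Z}e^{\pi i\tau\nu^2+2\pi i\nu w}$ and use its known transformation $F(w;\tau)=\frac{1}{\sqrt{-i\tau}}e^{\pi i w^2/\tau} F(w/\tau;-1/\tau)$, which follows from the Gaussian being its own Fourier transform up to the scaling $\widehat{e^{-\pi a x^2}}(\xi)=a^{-1/2}e^{-\pi\xi^2/a}$ with $a=-i\tau$. Substituting $w=z+\tfrac{\tau}{2}+\tfrac12$ to incorporate the half-integer shift and the linear term, then collecting the resulting Gaussian and root-of-unity prefactors, reproduces $-i\sum_\nu(-1)^\nu e^{\pi i \tau(\nu+1/2)^2}e^{2\pi i z(\nu+1/2)}$ transformed into $\frac{i}{\sqrt{-i\tau}}e^{-\pi i z^2/\tau}\vartheta(z/\tau;-1/\tau)$ after simplification. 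The main obstacle is exactly this last bookkeeping: keeping precise track of the half-integer shift in the summation index, the branch of $\sqrt{-i\tau}$ (chosen positive for $\tau\in\H$), and the phase factors so that the constant works out to exactly $i$ and the exponent to $-\pi i z^2/\tau$. Since all three statements are classical facts about the Jacobi theta function, I would ultimately present this concisely, referring to \cite{mumford2007tata} for the details of the Poisson summation computation and emphasizing only the coset substitution that produces the stated normalization.
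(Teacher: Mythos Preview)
Your proof plan is correct and standard: the first two identities follow immediately from the series (or product) representation, and the modular transformation is the classical Poisson summation argument for $\theta_1$. There is nothing wrong with the argument or the bookkeeping you outline.

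However, the paper does not actually prove this lemma. It simply records the identities as well-known, prefacing the statement with ``It is well-known that $\vartheta$ is a Jacobi form (see e.g.\ \cite{mumford2007tata})'' and giving no further argument. So your proposal is strictly more detailed than what appears in the paper; you are supplying a genuine proof sketch where the authors only cite. Since you also end by deferring the Poisson summation details to \cite{mumford2007tata}, the two treatments are ultimately aligned --- yours just makes the mechanism explicit before invoking the reference.
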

We also have the well-known triple product formula (see e.g. \cite[Proposition 1.3]{zwegers2008mock}), yielding
\begin{equation}\label{Equation: summation rep for theta}
\vartheta(z;\tau) = iq^{\frac{1}{8}} \zeta^{\frac{1}{2}} \sum_{n \in \Z} (-1)^n q^{\frac{n^2+n}{2}} \zeta^{n}.
\end{equation}
Furthermore, we have the following modular transformation formula of $\eta$ (see e.g. \cite{koecher2007elliptische}).

\begin{lemma}\label{Lemma: transformation of eta}
	We have that
	\begin{equation*}
	\begin{split}
\eta(\tau) = \sqrt{\frac{i}{\tau}} 	\eta\left( -\frac{1}{\tau} \right) .
	\end{split}
	\end{equation*}
\end{lemma}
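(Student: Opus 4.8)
The identity is classical (it is exactly the $S$-transformation of $\eta$, cited to \cite{koecher2007elliptische}), but here is the route I would take, using only the product definition of $\eta$ and $\vartheta$ together with the transformation law for $\vartheta$ recorded in Lemma~\ref{Lemma: transformation of theta}. The plan is to pass to $\eta^3$ via Jacobi's derivative formula, apply the theta transformation, and then remove the resulting cube-root ambiguity by a connectedness argument and evaluation at a fixed point.

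First I would record Jacobi's derivative formula $\frac{\partial}{\partial z}\vartheta(z;\tau)\big|_{z=0} = -2\pi\,\eta(\tau)^3$. This follows by logarithmic differentiation of the product expansion: isolating the factor vanishing at $z=0$, one writes $\vartheta(z;\tau) = i\zeta^{\frac12}q^{\frac18}(1-\zeta^{-1})\prod_{n\geq 1}(1-q^n)(1-\zeta q^n)(1-\zeta^{-1}q^n)$, and as $z\to 0$ one has $\zeta\to 1$ and $1-\zeta^{-1} = 2\pi i z + O(z^2)$, so the leading term is $-2\pi z\, q^{\frac18}\prod_{n\geq 1}(1-q^n)^3 = -2\pi z\,\eta(\tau)^3$. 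Next I would differentiate the last identity of Lemma~\ref{Lemma: transformation of theta}, namely $\vartheta(z;\tau) = \tfrac{i}{\sqrt{-i\tau}}e^{-\pi i z^2/\tau}\vartheta\bigl(\tfrac{z}{\tau};-\tfrac1\tau\bigr)$, with respect to $z$ and set $z=0$. Two of the three terms produced by the product rule carry the factor $\vartheta(0;-\tfrac1\tau)$, which vanishes by the first relation in Lemma~\ref{Lemma: transformation of theta}, leaving $\vartheta_z(0;\tau) = \tfrac{i}{\tau\sqrt{-i\tau}}\,\vartheta_z(0;-\tfrac1\tau)$. Substituting Jacobi's formula and cancelling the common $-2\pi$ gives $\eta(\tau)^3 = \tfrac{i}{\tau\sqrt{-i\tau}}\,\eta(-\tfrac1\tau)^3$; since $i/\tau = 1/(-i\tau)$ and $-i\tau$ lies in the right half-plane for $\tau\in\H$, the principal branches satisfy $\sqrt{i/\tau} = 1/\sqrt{-i\tau}$, and the constant rewrites as $(i/\tau)^{3/2}$, i.e. $\eta(\tau)^3 = (i/\tau)^{3/2}\,\eta(-\tfrac1\tau)^3$.

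Finally, since $\eta$ is holomorphic and nowhere vanishing on the simply connected domain $\H$, the function $\tau\mapsto \eta(\tau)\big/\bigl(\sqrt{i/\tau}\,\eta(-\tfrac1\tau)\bigr)$ is a holomorphic cube root of the constant function $1$, hence is itself a constant cube root of unity $\omega$. Evaluating at the fixed point $\tau=i$, where $-\tfrac1\tau = i$ and $\sqrt{i/\tau}=1$, forces $\eta(i) = \omega\,\eta(i)$, and since $\eta(i)\neq 0$ we conclude $\omega = 1$, which is precisely the stated identity. The only genuinely delicate point in this argument is the bookkeeping of the branches of the square and cube roots on $\H$; everything else is routine. (Alternatively one could deduce it from the quasimodularity of $E_2 = -\tfrac{24}{2\pi i}\tfrac{\eta'}{\eta}$ together with $\Delta = \eta^{24}$, but the theta-derivative route above is the shortest given what is already available in Section~\ref{Section: prelims}.)
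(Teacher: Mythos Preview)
Your proof is correct and complete. The paper does not actually prove this lemma; it merely quotes the transformation law with a citation to \cite{koecher2007elliptische}. You instead supply a self-contained argument using only material already available in Section~\ref{Section: prelims}: the product expansion of $\vartheta$, its $S$-transformation from Lemma~\ref{Lemma: transformation of theta}, Jacobi's derivative identity $\vartheta_z(0;\tau)=-2\pi\,\eta(\tau)^3$ (which you derive directly from the product), and a cube-root argument pinned down at the fixed point $\tau=i$. This is a standard and clean route, and the branch bookkeeping you flag is indeed the only delicate point; your handling of it --- observing that $-i\tau$ lies in the right half-plane for $\tau\in\H$, so principal powers satisfy $(i/\tau)^{3/2}=1/(-i\tau)^{3/2}$ --- is correct. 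In short, the paper outsources the proof while you give one; there is nothing to repair.
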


\subsection{Integrals over segments of circles} 
Let $U_r(z_0)\coloneqq\{z: |z-z_0|<r\}$ be the open disc around $z_0 \in \C$ with radius $r$. Then we have the following result \cite[page 263]{curtiss1978introduction}. 

\begin{lemma}  \label{Theorem: integral over segment of circle}
	Let $g: U_r(z_0)\backslash\{z_0\} \rightarrow\mathbb{C}$ be analytic and have a simple pole at $z_0$. Let $\gamma(\delta)$ be a circular arc with parametric equation $z=z_0+\delta e^{i\theta}$, for $-\pi<\theta_1\leq\theta\leq\theta_2\leq \pi$ and $0<\delta< r$. Then
	$$ \lim_{\delta\rightarrow 0} \int_{\gamma(\delta)} g(z) dz =  i (\theta_2-\theta_1) \operatorname{Res}(g,z_0).$$
\end{lemma}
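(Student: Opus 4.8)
The plan is to reduce the claim to two elementary computations by isolating the principal part of $g$ at its pole. Since $g$ is analytic on the punctured disc $U_r(z_0)\setminus\{z_0\}$ with a simple pole at $z_0$, I would write
\[
g(z) = \frac{R}{z-z_0} + h(z), \qquad R \coloneqq \operatorname{Res}(g,z_0),
\]
where $h$ is analytic on the full disc $U_r(z_0)$; this is exactly the content of ``simple pole with residue $R$'', namely that $g(z)-R/(z-z_0)$ has a removable singularity at $z_0$. By linearity of the line integral it then suffices to evaluate $\int_{\gamma(\delta)} \frac{dz}{z-z_0}$ exactly and to show that $\int_{\gamma(\delta)} h(z)\,dz \to 0$ as $\delta \to 0$.

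For the first integral I would simply insert the given parametrization $z = z_0 + \delta e^{i\theta}$, so that $dz = i\delta e^{i\theta}\,d\theta$ and $z-z_0 = \delta e^{i\theta}$; the factors $\delta e^{i\theta}$ cancel and
\[
\int_{\gamma(\delta)} \frac{dz}{z-z_0} = \int_{\theta_1}^{\theta_2} i\,d\theta = i(\theta_2-\theta_1),
\]
independently of $\delta$. This already supplies the asserted main term $i(\theta_2-\theta_1)R$.

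For the second integral I would use a standard $ML$-estimate. Fixing any $\rho \in (0,r)$, the function $h$ is continuous on the compact disc $\overline{U_\rho(z_0)}$ and hence bounded there, say $|h|\le M$. For $0<\delta<\rho$ the arc $\gamma(\delta)$ lies inside $\overline{U_\rho(z_0)}$ and has length $\delta(\theta_2-\theta_1)$, so
\[
\left| \int_{\gamma(\delta)} h(z)\,dz \right| \le M\,\delta\,(\theta_2-\theta_1),
\]
which tends to $0$ as $\delta\to 0$. Combining the two parts yields $\lim_{\delta\to 0}\int_{\gamma(\delta)} g(z)\,dz = i(\theta_2-\theta_1)\operatorname{Res}(g,z_0)$.

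I do not anticipate a genuine obstacle here: the whole proof is the exact cancellation in the $1/(z-z_0)$ integral together with the vanishing of the analytic remainder. The only point worth stating with care is the splitting step, since it is precisely the definition of a simple pole that makes $g(z)-R/(z-z_0)$ extend analytically across $z_0$; everything else is a direct computation plus the elementary bound on the length of a circular arc.
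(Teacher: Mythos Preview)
Your proof is correct and is in fact the standard argument for this classical lemma. The paper does not give its own proof of this statement; it is recorded in the preliminaries as a known result with a reference to \cite{curtiss1978introduction}, so there is no alternative approach in the paper to compare against.
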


\subsection{A particular bound}
We require a bound on the size of 
\begin{equation*}
P(q) \coloneqq \frac{q^{\frac{1}{24}}}{\eta(\tau)}, 
\end{equation*}
away from the pole at $q=1$. For this we use \cite[Lemma 3.5]{bringmann2016dyson}.
\begin{lemma}
	Let $\tau = u +iv \in \H$ with $Mv \leq u \leq \frac{1}{2}$ for $u>0$ and $v \rightarrow 0$. Then
	\begin{equation*}
	|P(q)| \ll \sqrt{v} \exp \left[ \frac{1}{v} \left(\frac{\pi}{12} - \frac{1}{2\pi} \left(1- \frac{1}{\sqrt{1+M^2}}\right)\right) \right].
	\end{equation*}
\end{lemma}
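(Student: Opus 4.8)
The plan is to reduce the bound to the classical radial asymptotics of the partition generating function plus the saving contributed by the first Euler factor. Since $P(q) = q^{1/24}/\eta(\tau) = \prod_{n\ge 1}(1-q^n)^{-1}$, expanding $-\log(1-w)=\sum_{j\ge 1}w^j/j$ and interchanging the absolutely convergent sums gives $\log P(q) = \sum_{k\ge1}\tfrac1k\tfrac{q^k}{1-q^k}$, and likewise $\log P(\rho) = \sum_{k\ge1}\tfrac1k\tfrac{\rho^k}{1-\rho^k}$ with $\rho\coloneqq\abs{q}=e^{-2\pi v}$. Writing $q^k/(1-q^k)=1/(1-q^k)-1$, taking real parts, isolating the term $k=1$, and bounding the tail crudely via $\abs{1-q^k}\ge 1-\rho^k$ (whence $\operatorname{Re}\tfrac1{1-q^k}\le\tfrac1{1-\rho^k}$), I obtain
\[
\log\abs{P(q)} \;=\; \operatorname{Re}\log P(q) \;\le\; \log P(\rho) \;+\; D(u,v), \qquad D(u,v)\coloneqq\operatorname{Re}\frac{1}{1-q}-\frac{1}{1-\rho}.
\]

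For the radial factor, Lemma~\ref{Lemma: transformation of eta} gives in one line $P(e^{-t}) = e^{-t/24}/\eta\!\left(\tfrac{it}{2\pi}\right) = e^{-t/24}\sqrt{t/2\pi}\,e^{\pi^2/(6t)}\prod_{n\ge1}(1-e^{-4\pi^2 n/t})^{-1}$, and the last product tends to $1$ as $t\to 0^+$; taking $t=2\pi v$ yields $\log P(\rho)=\tfrac{\pi}{12v}+\tfrac12\log v+o(1)$, which already supplies the prefactor $\sqrt v$ and the leading exponent $\tfrac{\pi}{12v}$.

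The heart of the proof is $D(u,v)$. Using $\abs{1-q}^2=(1-\rho)^2+2\rho(1-\cos 2\pi u)$ and $\operatorname{Re}(1-q)=1-\rho\cos 2\pi u$, and putting $a\coloneqq 1-\rho>0$, $x\coloneqq \rho(1-\cos 2\pi u)\ge 0$, a short computation collapses $D$ to
\[
D(u,v)\;=\;\frac{a+x}{a^2+2x}-\frac1a\;=\;-\,\frac{x(2-a)}{a(a^2+2x)}\;\le\;0 .
\]
Since $\partial_x\big(\tfrac{x}{a^2+2x}\big)=\tfrac{a^2}{(a^2+2x)^2}>0$, the function $-D$ is increasing in $x$, while $x$ is increasing in $u$ on $[Mv,\tfrac12]$; hence $D(u,v)$ is decreasing in $u$ there and the worst case is $u=Mv$. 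At $u=Mv$ one has, as $v\to0$, $a=2\pi v+O(v^2)$, $x=\rho(1-\cos(2\pi Mv))=2\pi^2M^2v^2+O(v^4)$, and $a^2+2x=4\pi^2v^2(1+M^2)(1+O(v))$, so
\[
D(u,v)\;\le\;-\,\frac{x(2-a)}{a(a^2+2x)}\bigg|_{u=Mv}\;=\;-\,\frac{1}{2\pi v}\cdot\frac{M^2}{1+M^2}+O(1).
\]

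Finally, writing $y\coloneqq\sqrt{1+M^2}\ge 1$ one has $\tfrac{M^2}{1+M^2}=1-\tfrac1{y^2}\ge 1-\tfrac1y=1-\tfrac1{\sqrt{1+M^2}}$, with a fixed positive gap; hence combining the two displayed estimates,
\[
\log\abs{P(q)} \;\le\; \tfrac12\log v + \frac1v\left(\frac{\pi}{12}-\frac{1}{2\pi}\Big(1-\frac{1}{\sqrt{1+M^2}}\Big)\right) + O(1),
\]
and exponentiating gives the claim. The only genuinely delicate point is making the estimate for $D$ uniform across the whole window $Mv\le u\le\tfrac12$, and checking that the expansion errors at $u=Mv$ contribute $O(1)$ rather than $O(1/v)$; both become routine once the monotonicity in $u$ has reduced matters to the single endpoint $u=Mv$.
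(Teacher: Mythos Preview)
The paper does not actually prove this lemma; it quotes it verbatim as \cite[Lemma~3.5]{bringmann2016dyson} and uses it as a black box. So there is no in-paper argument to compare against. Your proof is correct and is essentially the classical argument underlying the cited result: expand $\log P(q)$ as the Lambert series $\sum_{k\ge1}\tfrac1k\tfrac{q^k}{1-q^k}$, compare with the radial value $\log P(\rho)$ term by term (the inequality $\operatorname{Re}\tfrac{1}{1-w}\le\tfrac{1}{1-|w|}$ handles all $k\ge2$), keep the $k=1$ term for the saving, and evaluate $P(\rho)$ via the $\eta$-inversion. The monotonicity reduction to the endpoint $u=Mv$ and the second-order expansion there are carried out cleanly.

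Two minor comments. First, the phrase ``with a fixed positive gap'' after $\tfrac{M^2}{1+M^2}\ge 1-\tfrac{1}{\sqrt{1+M^2}}$ is inaccurate: the difference equals $\tfrac{y-1}{y^2}$ with $y=\sqrt{1+M^2}$, which tends to $0$ as $M\to0$ (and in the paper's application $M=m^{-1/3}\to0$). This is harmless, since you only need the inequality in one direction to pass to the stated exponent, but the remark should be dropped. Second, your $O(1)$ at $u=Mv$ is indeed uniform in $M$, because the relative errors in $a$, $x$, and $a^2+2x$ are all $O(v)$ independently of $M$, and the main term satisfies $\tfrac{M^2}{1+M^2}\le1$; it is worth saying this explicitly, since the lemma is applied with $M$ varying.
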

In particular, with $v = \frac{\beta}{2\pi}$, $u=\frac{\beta m^{-\frac 13} x}{2 \pi}$ and $M = m^{-\frac 13}$ this gives for $1 \leq x \leq \frac{\pi m^{\frac 13}}{\beta}$ the bound
\begin{equation}\label{Equation: bound on P(q)}
|P(q)| \ll n^{-\frac{1}{4}} \exp \left[ \frac{2 \pi}{\beta} \left( \frac{\pi}{12} - \frac{1}{2\pi} \left( 1- \frac{1}{\sqrt{1 + m^{-\frac 23}}} \right) \right) \right].
\end{equation}

\subsection{$I$-Bessel functions}
Here we recall relevant results on the $I$-Bessel function which may be written as
\begin{equation*}
I_{\ell} (x) \coloneqq \frac{1}{2 \pi i} \int_{\Gamma} t^{-\ell - 1}e^{\frac{x}{2} \left(t+\frac{1}{t} \right)} dt,
\end{equation*}
where $\Gamma$ is a contour which starts in the lower half plane at $-\infty$, surrounds the origin counterclockwise and returns to $-\infty$ in the upper half-plane. We are particularly interested in the asymptotic behaviour of $I_\ell$, given in the following lemma (see e.g. \cite[(4.12.7)]{andrews1999special}).
\begin{lemma}\label{Lemma: asymptotic of I Bessel}
	For fixed $\ell$ we have
	\begin{equation*}
	I_\ell(x) = \frac{e^x}{\sqrt{2 \pi x}} + O\left(\frac{e^x}{x^{\frac{3}{2}}}\right)
	\end{equation*}
	as $x \rightarrow \infty$.
\end{lemma}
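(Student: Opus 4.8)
The final statement is the classical asymptotic expansion of the modified Bessel function $I_\ell(x)$ as $x\to\infty$, and the cleanest route is the saddle-point method applied directly to the contour-integral representation displayed just before the lemma. I would start from
\begin{equation*}
I_\ell(x) = \frac{1}{2\pi i}\int_\Gamma t^{-\ell-1}\exp\left(\frac{x}{2}\left(t+\frac1t\right)\right)dt,
\end{equation*}
deform $\Gamma$ to the steepest-descent contour through the relevant saddle of the phase $\phi(t) = \frac12(t+1/t)$, and extract the leading term plus error. The key observation is that $\phi'(t) = \frac12(1-t^{-2})$ vanishes at $t=\pm1$; the saddle $t=1$ is the one the contour $\Gamma$ (coming from $-\infty$ below, looping the origin, returning to $-\infty$ above) can be pushed onto, and there $\phi(1)=1$, giving the factor $e^x$. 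Near $t=1$ write $t = 1+is$ with $s$ real (this is the direction of steepest descent, since $\phi''(1) = t^{-3}|_{t=1} = 1>0$, so $\phi(1+is)\approx 1 - \tfrac12 s^2$), and the prefactor $t^{-\ell-1}$ is $\approx 1$ at the saddle.

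The main steps, in order, are: (i) justify the contour deformation of $\Gamma$ onto a path that passes through $t=1$ vertically and escapes to $-\infty$ in both half-planes along rays where $\re\phi\to-\infty$, using that the integrand is holomorphic on $\C\setminus\{0\}$ and decays suitably; (ii) localize the integral to a neighborhood $|s|\le \delta$ of the saddle, showing the tail contributes $O(e^{x(1-c\delta^2)})$ for some $c>0$, which is exponentially smaller than any power-of-$x$ correction to $e^x$; (iii) on the localized piece, substitute $t=1+is$, Taylor-expand $\phi(1+is) = 1 - \tfrac12 s^2 + O(s^3)$ and $t^{-\ell-1} = 1 + O(s)$, so that
\begin{equation*}
I_\ell(x) = \frac{1}{2\pi}\int_{-\delta}^{\delta} \left(1+O(s)\right)\exp\left(x - \frac{x}{2}s^2 + O(xs^3)\right)ds;
\end{equation*}
(iv) rescale $s = u/\sqrt{x}$, so the integral becomes $\frac{e^x}{2\pi\sqrt{x}}\int_{-\delta\sqrt{x}}^{\delta\sqrt{x}}(1+O(u/\sqrt{x}))e^{-u^2/2+O(u^3/\sqrt{x})}du$; (v) extend the range to all of $\R$ (incurring exponentially small error), use $\int_\R e^{-u^2/2}du = \sqrt{2\pi}$ for the main term, and bound the $O(u/\sqrt{x})$ and $O(u^3/\sqrt{x})$ contributions against a Gaussian to get an error of size $O(x^{-1})$ relative to the main term. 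Collecting, $I_\ell(x) = \frac{e^x}{\sqrt{2\pi x}}(1+O(x^{-1})) = \frac{e^x}{\sqrt{2\pi x}} + O\left(\frac{e^x}{x^{3/2}}\right)$, as claimed.

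\textbf{Main obstacle.} The genuinely delicate point is step (i)–(ii): making the contour deformation rigorous and controlling the non-local part of the integral. One must choose the deformed contour so that along its escaping arms $\re\phi(t)\to -\infty$ fast enough that the integral converges and the contribution from $|t-1|\ge\delta$ is provably $O(e^{x(1-c\delta^2)})$ uniformly; near $t=0$ (which $\Gamma$ must wind around) one needs $\re(1/t)$ to stay bounded above, so the contour should avoid a neighborhood of the positive real axis near the origin. Once the contour is fixed and the saddle isolated, the remaining analysis is a routine Laplace/Watson-lemma computation. Since the paper only needs the stated one-term asymptotic with its $O(e^x x^{-3/2})$ error (not a full asymptotic series), it suffices to carry the Taylor expansions to the orders indicated above; alternatively one may simply cite the standard reference \cite[(4.12.7)]{andrews1999special} as the excerpt already does, and the sketch above records why that formula holds.
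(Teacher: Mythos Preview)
Your saddle-point argument is correct and is the standard derivation of this asymptotic. However, the paper does not prove this lemma at all: it simply states the result and cites \cite[(4.12.7)]{andrews1999special}, treating it as a known fact from the literature. So there is no ``paper's own proof'' to compare against.

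Your sketch therefore goes well beyond what the paper provides. The steepest-descent approach you outline --- saddle at $t=1$, vertical descent direction $t=1+is$, localization, Gaussian integral yielding $\sqrt{2\pi}$, and relative error $O(x^{-1})$ from the subleading Taylor terms --- is exactly the textbook route (and indeed is essentially how Andrews--Askey--Roy derive it). You correctly identify the only genuinely technical point as the contour deformation and tail control in steps (i)--(ii); your description of how to handle it (keep the contour away from the positive real axis near $0$, ensure $\operatorname{Re}\phi\to-\infty$ along the escaping arms) is accurate. For the purposes of this paper, your final sentence is the right takeaway: since the authors only need the one-term asymptotic with its stated error, a citation suffices, and your sketch simply records the mechanism behind the cited formula.
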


\section{Fourier Coefficients of $f$}\label{Section: asymptotic behaviour of f}

 Note that $f(-z;\tau)=f(z;\tau)$ by Lemma \ref{Lemma: transformation of theta}, and so $c(-m,n)=c(m,n)$. For the case $m=0$ one can use classical results (see e.g. \cite[Theorem 15.10]{bringmann2017harmonic}) to calculate the Fourier coefficients. We therefore restrict our attention to the case $m> 0$.

We first define the Fourier coefficients of $\zeta^m$ of $f$. Since we focus only on the case $z \in [0,1]$, we let $h_1,...,h_s\in\mathbb{Q}$ denote the poles of $f$ in this range. Note that the distribution of the poles is symmetric on the interval in question. 

Define the path of integration $\Gamma_{\ell,r}$ by
\begin{align*}
	\Gamma_{\ell,r} \coloneqq 
	\begin{dcases}
		0 \text{ to } h_1-r & \text{ if } \ell=0,\\
		h_{\ell}+r \text{ to } h_{\ell+1}-r & \text{ if } 1\leq\ell\leq s-1,\\
		h_s+r \text{ to } 1 & \text{ if } \ell=s,\\
	\end{dcases}
\end{align*}
for some $r>0$ sufficiently small. Following the framework of \cite{dabholkar2012quantum,males2020asymptotic,males2021asymptoticcorrigendum}, we define 
\begin{align*}
f_m^{\pm} (\tau) &\coloneqq\sum\limits_{\ell=0}^{s}  \int_{\Gamma_{\ell,r}}f(z;\tau) e^{-2\pi i mz} dz +\sum\limits_{\ell=1}^{s}G^{\pm}_{\ell,r},
\end{align*}
where
\begin{align*}
G^{\pm}_{\ell,r} \coloneqq \int\limits_{\gamma_{\ell,r}^{\pm}} f(z;\tau)e^{-2\pi imz} dz
\end{align*}
for a fixed pole $h_\ell$ ($1\leq \ell\leq s$).
Here, $\gamma_{\ell,r}^+$ is the semi-circular path of radius $r$ passing above the pole $h_\ell$ and $\gamma_{\ell,r}^-$ is the semi-circular path passing below the pole $h_\ell$, see Figures \ref{pathG+} and \ref{pathGl+}.

\begin{figure}[h!] 
	\centering
	\begin{tikzpicture}

	\begin{scope}[thick]
	\draw                  (0,0) circle (.5pt) node[left] {$0$} -- (0.5,0); 
	
	\draw[->] (0.5,0)  arc (180:90:.5cm); 		
	\draw[] (1.5,0) arc (0:90:0.5cm);			
	
	\draw[] (1,0) circle (.5pt) node[below] {$h_1$};
	
	\draw                  (1.5,0) -- (2.25,0);  
	
	\draw[->] (2.25,0)  arc (180:90:.5cm); 	
	\draw[] (3.25,0) arc (0:90:0.5cm);		
	
	\draw[] (2.75,0) circle (.5pt) node[below] {$h_2$};
	
	\draw                  (3.25,0) -- (4,0);
	
	\draw[->] (4,0)  arc (180:90:.5cm); 	
	\draw[] (5,0) arc (0:90:0.5cm);
	
	\draw[] (4.5,0) circle (.5pt) node[below] {$h_3$};
	
	\draw                  (5,0) -- (5.75,0);
	
	\draw[dotted]         (5.75,0)  --  (7,0);   
	
	\draw                  (7,0) -- (7.75,0);
	
	\draw[->] (7.75,0)  arc (180:90:.5cm); 	
	\draw[] (8.75,0) arc (0:90:0.5cm);
	
	\draw[] (8.25,0) circle (.5pt) node[below] {$h_{s-2}$};
	
	\draw                  (8.75,0) -- (9.5,0);
	
	\draw[->] (9.5,0)  arc (180:90:.5cm); 	
	\draw[] (10.5,0) arc (0:90:0.5cm);
	
	\draw[] (10,0) circle (.5pt) node[below] {$h_{s-1}$};
	
	\draw                  (10.5,0) -- (11.25,0);
	
	\draw[->] (11.25,0)  arc (180:90:.5cm); 	
	\draw[] (12.25,0) arc (0:90:0.5cm);
	
	\draw[] (11.75,0) circle (.5pt) node[below] {$h_{s}$};
	
	\draw         (12.25,0) -- (12.75,0)  circle (.5pt) node[right] {$1$};
	\end{scope}
	\end{tikzpicture}
	\caption{The path of integration taking $\gamma^+_{\ell,r}$ at each pole.} 
	\label{pathG+}
\end{figure}
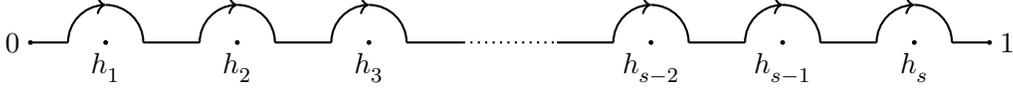

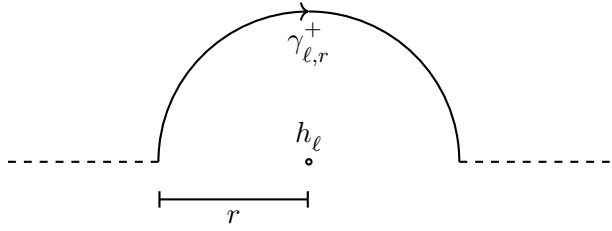
\begin{figure}[h!]
	\centering
	\begin{tikzpicture}  
	
	\begin{scope}[thick]
	
	\draw[dashed]         (-6,0)  --  (-4,0);   
	
	\draw[->] (-4,0) arc (180:90:2cm) node[below] {$\gamma_{\ell,r}^+$};  
	\draw[] (0,0) arc (0:90:2cm);		
	
	\draw[] (-2,0) circle (1pt) node[above] {$h_\ell$};
	
	\draw[|- |] (-4,-0.5) --node[below] {$r$} (-2,-0.5) ;
	
	\draw[dashed]         (0,0)  --  (2,0);
	
	\end{scope}
	
	\end{tikzpicture}
	\caption{The contour $\gamma^+_{\ell,r}$ for a fixed $\ell$.}
	\label{pathGl+}
\end{figure}
Following \cite{dabholkar2012quantum} the Fourier coefficient of $\zeta^m$ of $f$, for fixed $m$, is given by 
\begin{align}\label{Equation: definition of f_m}
f_m(\tau) \coloneqq& \lim\limits_{r\rightarrow 0^+} \frac{f^+_m(\tau)+f^-_m(\tau)}{2} 
=\lim\limits_{r\rightarrow 0^+} \frac 12 \left(2\sum\limits_{\ell=0}^{s}  \int_{\Gamma_{\ell,r}}f(z;\tau) e^{-2\pi i mz} dz+ \sum_{\ell=1}^{s} G^+_{\ell,r}+G^-_{\ell,r}\rule{0pt}{1cm}\right).  
\end{align}
For fixed $\ell$ we use Lemma \ref{Theorem: integral over segment of circle} to see that 
$$ \lim\limits_{r\rightarrow 0^+} \left(G^+_{\ell,r}+G^-_{\ell,r}\right) = 0, $$
since we only have simple poles.

The substitution $z\mapsto 1-z$ gives us
	\begin{align*}
		\sum\limits_{\ell=0}^{s} \int_{\Gamma_{\ell,r}}f(z;\tau) e^{-2\pi i mz} dz=&  -\sum\limits_{\ell=0}^{s}\int_{\Gamma_{\ell,r}}f(z;\tau) e^{2\pi i m z} dz,
	\end{align*}
	since $b$ is even and using that $f(1-z;\tau)=(-1)^{b+1} f(z;\tau)$ by Lemma \ref{Lemma: transformation of theta}.
	Thus, \eqref{Equation: definition of f_m} simplifies to
	\begin{align}\label{f_m with cancelled residue part}
		f_m(\tau) =& -i \lim_{r\rightarrow 0^+} \sum_{\ell=0}^s \int_{\Gamma_{\ell,r}} f(z;\tau) \sin(2\pi mz) dz .
	\end{align}

\begin{remark}
	For odd $b$ one would obtain a similar formula with the integrand replaced by \newline$f(z;\tau) \cos(2\pi mz)$.
\end{remark}

In the following two sections we determine the asymptotic behavior of $f$ towards and away from the dominant pole at $q=1$, respectively. From now on we will let $\tau=\frac{i\varepsilon}{2\pi}$, $\varepsilon \coloneqq \beta\left(1+ix m^{-\frac{1}{3}}\right)$, $\beta \coloneqq \pi\sqrt{\frac 2n}$ and $|m|\leq \frac{1}{6\beta}n^{-\delta} \log(n)$ for some small $\delta>0$ such that $m\to \infty$ as $n\to \infty$.

\section{Bounds toward the dominant pole}\label{Section: towards dominant pole}

In this section we consider the behavior of $f_m$ toward the dominant pole at $q=1$. 
\begin{lemma} \label{Lemma: asymptotic behaviour f towards the dominant pole}
	Let $\tau=\frac{i\varepsilon}{2\pi}$, with $0<\operatorname{Re}(\varepsilon)\ll1$, let $z$ be away from the poles, let $\mathcal{M}(z)$ be an explicit function which is positive for all $z\in(0,1)$, and let
	$$ \mathcal{C}\left(z,\tau\right) \coloneqq (-1)^{\sum\limits_{j=1}^{N}\alpha_j}  \left(\frac{\varepsilon}{2\pi}\right)^{-\frac{1}{2}\sum\limits_{j=1}^{N}\alpha_j} c^{\frac 12}\left(\prod\limits_{j=1}^{N}a_j^{-\frac{\alpha_j}{2}}\right)  \frac{\sinh\left(\frac{2\pi^2 z}{\varepsilon}\right)}{ \sinh\left(\frac{2\pi^2 b z}{c\varepsilon}\right)} e^{\frac{2\pi^2}{\varepsilon} \left(\frac{4b^2z^2+1}{4c}-\frac{4z^2+1}{4}-\sum\limits_{j=1}^N\frac{\alpha_j}{12 a_j}\right)}.$$
	Then we have that
	\begin{align*}
	f\left(z;\frac{i \varepsilon}{2 \pi}\right) = \mathcal{C}\left(z,\frac{i\varepsilon}{2\pi}\right) \left(1 +O\left(e^{-\frac{4\pi^2}{\varepsilon}\mathcal{M}(z)}\right)\right)
	\end{align*}
	as $n \rightarrow \infty$.
\end{lemma}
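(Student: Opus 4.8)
The plan is to compute the asymptotic behaviour of $f(z;\frac{i\varepsilon}{2\pi})$ as $\varepsilon \to 0$ (equivalently $n\to\infty$) by applying the modular transformations of Lemmas \ref{Lemma: transformation of theta} and \ref{Lemma: transformation of eta} to each factor separately, and then tracking the dominant exponential. First I would treat the eta-factors: for each $j$, write $\eta(a_j\tau)$ with $\tau = \frac{i\varepsilon}{2\pi}$, so $a_j\tau = \frac{i a_j\varepsilon}{2\pi}$, and apply $\eta(\tau) = \sqrt{i/\tau}\,\eta(-1/\tau)$ to get $\eta\left(\frac{i a_j\varepsilon}{2\pi}\right) = \sqrt{\frac{2\pi}{a_j\varepsilon}}\,\eta\left(\frac{2\pi i}{a_j\varepsilon}\right)$. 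Since $\operatorname{Re}(1/\varepsilon)\to\infty$, the nome of $\eta\left(\frac{2\pi i}{a_j\varepsilon}\right)$ tends to $0$ exponentially, so $\eta\left(\frac{2\pi i}{a_j\varepsilon}\right) = e^{-\frac{\pi^2}{6 a_j\varepsilon}}\left(1 + O\left(e^{-\frac{4\pi^2}{a_j\varepsilon}}\right)\right)$ from the product expansion $\eta(w) = e^{\pi i w/12}\prod_{k\ge1}(1-e^{2\pi i k w})$. Raising to the power $\alpha_j$ and multiplying over $j$ produces the prefactor $(-1)^{?}\left(\frac{\varepsilon}{2\pi}\right)^{-\frac12\sum\alpha_j}\prod a_j^{-\alpha_j/2}$ together with the exponential contribution $\exp\left(-\frac{2\pi^2}{\varepsilon}\sum_j \frac{\alpha_j}{12 a_j}\right)$, matching the last summand in the exponent of $\mathcal C$; the sign $(-1)^{\sum\alpha_j}$ will come from the branch of $\sqrt{i/\tau}$ raised to integral powers (one has to be careful that $\sqrt{i/\tau} = \sqrt{2\pi/(a_j\varepsilon)}$ up to a sign that depends on the argument of $\varepsilon$, but since $\operatorname{Re}\varepsilon>0$ and $\operatorname{Im}\varepsilon$ is small this branch is unambiguous, and the powers $\alpha_j$ collect to $(-1)^{\sum\alpha_j}$).

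Next I would handle the theta-quotient $\vartheta(z;\tau)/\vartheta(bz;c\tau)$. For the numerator, apply the third identity in Lemma \ref{Lemma: transformation of theta}: $\vartheta(z;\tau) = \frac{i}{\sqrt{-i\tau}}e^{-\pi i z^2/\tau}\vartheta(z/\tau;-1/\tau)$. With $\tau = \frac{i\varepsilon}{2\pi}$ we get $-1/\tau = \frac{2\pi i}{\varepsilon}$, so the transformed theta has nome $e^{-4\pi^2/\varepsilon}\to0$ exponentially; using the product (or triple-product) expansion, $\vartheta(z/\tau;-1/\tau) = i e^{\pi i z/\tau}e^{-\pi^2/(2\varepsilon)}\left(1 - e^{-2\pi^2 z/\pi\cdot\ldots}\right)\cdots$ — more cleanly, $\vartheta(w;w') = -i e^{\pi i w'/4}\left(e^{\pi i w} - e^{-\pi i w}\right)\prod_{k\ge1}(\cdots)$, so that the leading behaviour is $\propto \sinh\left(\frac{2\pi^2 z}{\varepsilon}\right)e^{-\pi^2/(2\varepsilon)}$ up to exponentially small relative error. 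Collecting the Gaussian factor $e^{-\pi i z^2/\tau} = e^{-2\pi^2 z^2/\varepsilon}$ and the $e^{-\pi^2/(2\varepsilon)}$ gives the numerator's exponential as $e^{-\frac{2\pi^2}{\varepsilon}\cdot\frac{4z^2+1}{4}}$, and similarly the denominator $\vartheta(bz;c\tau)$, after the same transformation with $c\tau = \frac{ic\varepsilon}{2\pi}$, $-1/(c\tau) = \frac{2\pi i}{c\varepsilon}$, contributes $\sinh\left(\frac{2\pi^2 bz}{c\varepsilon}\right)e^{-\frac{2\pi^2}{\varepsilon}\cdot\frac{4b^2 z^2+1}{4c}}$. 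Taking the ratio, the prefactors $\frac{i}{\sqrt{-i\tau}}$ divided by $\frac{i}{\sqrt{-ic\tau}}$ give $c^{1/2}$, the $\sinh$ quotient appears as in $\mathcal C$, and the exponents combine into $\frac{2\pi^2}{\varepsilon}\left(\frac{4b^2 z^2+1}{4c} - \frac{4z^2+1}{4}\right)$. Multiplying by the eta-contribution completes the claimed formula for $\mathcal C(z,\frac{i\varepsilon}{2\pi})$.

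The remaining task is to package all the exponentially small errors. Each factor contributes a multiplicative error of the form $1 + O(e^{-c_j/\varepsilon})$ with $c_j$ a positive constant (from the tails of the respective infinite products, controlled by the fact that $z$ is bounded away from the poles so that neither $\sinh$ in the denominator degenerates and no cancellation occurs), and the product of these is $1 + O(e^{-c/\varepsilon})$ with $c = \min_j c_j > 0$; absorbing the $z$-dependence into a function $\mathcal M(z)$ that is positive on $(0,1)$ — it will be, up to constants, the minimum over all the relevant product-expansion tails, something like $\min\left(z, \frac{bz}{c}, 1 - \text{(stuff)}, \frac{1}{a_j}\right)$ measured against the $e^{-4\pi^2/\varepsilon}$ scale — gives the stated error $O(e^{-\frac{4\pi^2}{\varepsilon}\mathcal M(z)})$. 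The main obstacle I anticipate is bookkeeping rather than conceptual: (i) pinning down the precise branch/sign conventions for the square roots raised to the integer powers $\alpha_j$ so the overall sign $(-1)^{\sum\alpha_j}$ is correct, and (ii) verifying uniformity of the error term in $z$ — one must check that as $z$ approaches (but stays away from) a pole of $f$, the constant $\mathcal M(z)$ stays positive and the implied constant in the $O$-term does not blow up, which requires knowing how close the path of integration comes to the poles $h_1,\dots,h_s$; this is exactly why the hypothesis ``$z$ away from the poles'' is imposed, and the explicitness of $\mathcal M(z)$ on $(0,1)$ is what makes the later integration over $\Gamma_{\ell,r}$ tractable.
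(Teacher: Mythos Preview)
Your approach is essentially the paper's: apply the modular transformations of $\eta$ and $\vartheta$ to each factor of $f$, collect the prefactors and dominant exponentials into $\mathcal{C}$, and show the residual infinite product is $1+O(e^{-4\pi^2\mathcal M(z)/\varepsilon})$. The paper states this as ``a lengthy but straightforward calculation'' and then analyses the product
\[
\prod_{\kappa\ge 1}\frac{(1-e^{-\frac{4\pi^2\kappa}{\varepsilon}})(1-e^{-\frac{4\pi^2}{\varepsilon}(\kappa-z)})(1-e^{-\frac{4\pi^2}{\varepsilon}(\kappa+z)})\prod_j(1-e^{-\frac{4\pi^2\kappa}{a_j\varepsilon}})^{\alpha_j}}{(1-e^{-\frac{4\pi^2\kappa}{c\varepsilon}})(1-e^{-\frac{4\pi^2}{c\varepsilon}(\kappa-bz)})(1-e^{-\frac{4\pi^2}{c\varepsilon}(\kappa+bz)})},
\]
splitting the $\alpha_j$ into positive and negative parts and expanding each reciprocal as a geometric series.

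The one point you under-sell as ``bookkeeping'' is the denominator factor $(1-e^{-\frac{4\pi^2}{c\varepsilon}(\kappa-bz)})^{-1}$: since $b\ge 2$ and $z\in(0,1)$, there are indices $\kappa$ with $\kappa-bz<0$, and for those the exponential is \emph{large}, so the factor is not of the form $1+O(\text{small})$. The paper deals with this explicitly by setting $\kappa_0\coloneqq\min\{\kappa\ge1:\kappa-bz\ge0\}$, rewriting each bad factor as $-e^{\frac{4\pi^2}{c\varepsilon}(\kappa-bz)}\sum_{\nu\ge0}e^{\frac{4\pi^2\nu}{c\varepsilon}(\kappa-bz)}$, and this is precisely what forces the terms $\frac{\kappa_0-bz}{c}$ and $\frac{bz+1-\kappa_0}{c}$ into the minimum defining $\mathcal M(z)$. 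Your sketch of $\mathcal M(z)$ as ``$\min(z,\frac{bz}{c},1-\text{stuff},\frac{1}{a_j})$'' misses these, and the uniformity concern you raise at the end is really about this case rather than about proximity to the poles $h_\ell$. Otherwise your outline matches the paper's argument.
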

\begin{proof}
	Using Lemmas \ref{Lemma: transformation of theta} and \ref{Lemma: transformation of eta} and the definitions of $\eta$ and $\vartheta$ a lengthy but straightforward calculation shows that 
	\begin{align} 
	 f\left(z;\frac{i\varepsilon}{2\pi}\right)=\mathcal{C}\left(z,\frac{i\varepsilon}{2\pi}\right) \prod\limits_{\kappa \geq1} \frac{\left(1-e^{-\frac{4\pi^2\kappa }{\varepsilon}}\right)\left(1-e^{\frac{4\pi^2}{\varepsilon}\left(z-\kappa \right)}\right)\left(1-e^{\frac{4\pi^2}{\varepsilon}\left(-z-\kappa \right)}\right)\prod\limits_{j=1}^N\left(1-e^{-\frac{4\pi^2\kappa }{a_j\varepsilon}}\right)^{\alpha_j}}{\left(1-e^{-\frac{4\pi^2\kappa }{c\varepsilon}}\right)\left(1-e^{\frac{4\pi^2}{c\varepsilon}\left(b z-\kappa \right)}\right)\left(1-e^{\frac{4\pi^2}{c\varepsilon}\left(-b z-\kappa \right)}\right)}. \notag
	\end{align}

	In order to find a bound we inspect the asymptotic behavior of the product over $\kappa$. Splitting $\alpha_j$ into positive and negative powers, labeled by $\gamma_j,\delta_j\in\mathbb{N}$, and $a_j$ into $x_j$ and $y_j$ respectively we first rewrite this as
	
	\makeatletter
	\renewcommand{\maketag@@@}[1]{\hbox{\m@th\normalsize\normalfont#1}}
	\makeatother
	
	\begin{small}
	\begin{align}\label{equation: main product}
	\prod\limits_{\kappa \geq1} \frac{\left(1-e^{-\frac{4\pi^2\kappa }{\varepsilon}}\right)\left(1-e^{-\frac{4\pi^2}{\varepsilon}\left(\kappa -z\right)}\right)\left(1-e^{-\frac{4\pi^2}{\varepsilon}\left(\kappa +z\right)}\right)\prod\limits_{j=1}^{N_1} \left(1-e^{-\frac{4\pi^2 \kappa }{x_j\varepsilon}}\right)^{\gamma_j}  \prod\limits_{k=1}^{N_2}\left(\sum\limits_{\mu\geq 0} e^{-\frac{4\pi^2\mu \kappa }{y_k\varepsilon}}\right)^{\delta_k}}{\left(1-e^{-\frac{4\pi^2\kappa }{c\varepsilon}}\right)\left(1-e^{-\frac{4\pi^2}{c\varepsilon}\left(\kappa -bz\right)}\right)\left(1-e^{-\frac{4\pi^2}{c\varepsilon}\left(\kappa +bz\right)}\right)},
	\end{align}
	\end{small}
$\!\!$since $| e^{-\frac{4\pi^2\kappa}{y_k\varepsilon}}|<1 $ for all $\kappa\geq1$. We also have that $| e^{-\frac{4\pi^2\kappa }{c\varepsilon}}|<1$ and $| e^{-\frac{4\pi^2 }{c\varepsilon}(\kappa + bz)}|<1$ for all $\kappa\geq1$ since $b,c\in\N$. Therefore, we have that
	\begin{align*}
	\frac{1}{\left(1-e^{-\frac{4\pi^2\kappa }{c\varepsilon}}\right)\left(1-e^{-\frac{4\pi^2}{c\varepsilon}\left(\kappa +bz\right)}\right)} =\sum_{\lambda\geq0} e^{-\frac{4\pi^2\lambda \kappa }{c\varepsilon}} \sum_{\xi\geq0} e^{-\frac{4\pi^2\xi}{c\varepsilon}\left(\kappa +bz\right)} .
	\end{align*}
	
	Up to this point our calculations are independent of the size of $z$. The remaining term is
	\begin{align*} 
	\frac{1}{1-e^{-\frac{4\pi^2}{c\varepsilon}\left(\kappa-bz\right)}}.
	\end{align*}
	Let $\kappa _0$ be the smallest $\kappa \geq1$ such that $(\kappa -bz)\geq0$. We may rewrite
	\begin{align*}
	\prod_{\kappa  \geq 1} \frac{1}{\left(1-e^{-\frac{4\pi^2}{c\varepsilon}\left(\kappa -bz\right)}\right)}
	&=\prod_{\kappa =1}^{\kappa _0-1} \frac{1}{\left(1-e^{-\frac{4\pi^2}{c\varepsilon}\left(\kappa -bz\right)}\right)}\prod_{\kappa \geq \kappa _0} \sum_{\mu\geq0} e^{-\frac{4\pi^2\mu}{c\varepsilon}\left(\kappa -bz\right)}.
	\end{align*}
	The first product is 
	\begin{align*}
	 \prod_{\kappa =1}^{\kappa _0-1}\frac{1}{\left(1-e^{-\frac{4\pi^2}{c\varepsilon}\left(\kappa -bz\right)}\right)} = \prod_{\kappa =1}^{\kappa _0-1} -e^{\frac{4\pi^2}{c\varepsilon}\left(\kappa -bz\right)}\sum_{\nu\geq0}e^{\frac{4\pi^2\nu}{c\varepsilon}\left(\kappa -bz\right)}.
	\end{align*}
	
	Let 
	\begin{align*}
	\mathcal{M}(z)\coloneqq \begin{cases}
	\min\left(1-z, \frac{1}{x_j}, \frac{1}{y_k}, \frac{1}{c}, \frac{\kappa_0-bz}{c}, \frac{bz+1-\kappa_0}{c} \right), & \text{ if } \kappa_0 \neq 1, \\
	\min\left(1-z, \frac{1}{x_j}, \frac{1}{y_k}, \frac{1}{c}, \frac{\kappa_0-bz}{c} \right), & \text{ if } \kappa_0 = 1,
	\end{cases} 
	\end{align*}
	running over all $1\leq j\leq N_1$ and $1\leq k\leq N_2$. Note that for $0<\operatorname{Re}(\varepsilon)\ll 1$, and $z\in (0,1)$ we have $\mathcal{M}(z)>0$, so the product in \eqref{equation: main product} is of order
	\begin{align*}
	1 +O\left(e^{-\frac{4\pi^2}{\varepsilon}\mathcal{M}(z)}\right),
	\end{align*}
	which finishes the proof.
\end{proof}

\begin{remark}
	By separating into cases, one is able to obtain more precise asymptotics. However, this is not required for the sequel and we leave the details for the interested reader.
\end{remark}

\begin{theorem} \label{Theorem: bound f_m near pole}
	For $|x|\leq 1$ we have that
	\begin{align*}
	f_m\left(\frac{i\varepsilon}{2\pi}\right) =& \,\Lambda_1 \, \varepsilon^{1-\frac{1}{2}\sum\limits_{j=1}^{N}\alpha_j} e^{ \frac{2\pi^2}{\varepsilon}\Lambda_2 }    +O\left( \beta^{2-\frac{1}{2}\sum\limits_{j=1}^{N}\alpha_j}   e^{ \frac{2\pi^2}{\varepsilon} \Lambda_2 }\right)
	\end{align*}
as $n\rightarrow\infty$.
\end{theorem}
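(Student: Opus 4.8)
The plan is to start from the formula \eqref{f_m with cancelled residue part} for $f_m(\tau)$ as a sum of integrals of $f(z;\tau)\sin(2\pi m z)$ over the segments $\Gamma_{\ell,r}$, specialize $\tau = i\varepsilon/(2\pi)$, and insert the asymptotic expansion from Lemma \ref{Lemma: asymptotic behaviour f towards the dominant pole}, namely $f(z;i\varepsilon/2\pi) = \mathcal{C}(z,i\varepsilon/2\pi)(1 + O(e^{-4\pi^2 \mathcal{M}(z)/\varepsilon}))$. This reduces the problem to analyzing, for each segment, the integral $\int \mathcal{C}(z,i\varepsilon/2\pi)\sin(2\pi m z)\,dz$, whose integrand is (up to the $\varepsilon$-power prefactor and the hyperbolic sine ratio $\sinh(2\pi^2 z/\varepsilon)/\sinh(2\pi^2 b z/c\varepsilon)$) dominated by the Gaussian-type exponential $e^{\frac{2\pi^2}{\varepsilon}(\frac{b^2 z^2}{c} - z^2 + \frac{1}{4c} - \frac14 - \sum_j \frac{\alpha_j}{12 a_j})}$. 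Since $b^2 > c$, the coefficient $\frac{b^2}{c} - 1$ of $z^2$ is positive, so this exponential is \emph{maximized at the right endpoint} $z$ close to $1$ on the last segment $\Gamma_{s,r}$ (and the contributions of all segments with smaller right endpoint are exponentially suppressed). So the first step is: identify that only the behavior near $z = 1$ matters.

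Next I would carry out a Laplace-type (endpoint) analysis near $z = 1$. Write $z = 1 - w$ with $w$ small; expand the exponent of $\mathcal{C}$ to first order in $w$: the value at $z=1$ is $\frac{2\pi^2}{\varepsilon}\Lambda_2$ with $\Lambda_2 = \frac{b^2}{c} - \frac bc + \frac{1}{4c} - \frac14 - \sum_j \frac{\alpha_j}{12a_j}$ (note the shift by $-\frac bc$ comes from the cross term $2 b^2 z/c$ at $z=1$... actually from completing; I will track constants carefully), and the linear term in $w$ produces a factor with a large coefficient $\sim \frac{1}{\varepsilon}$, so the integral localizes to a window $w = O(\varepsilon) = O(\beta)$. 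On this window, $\sinh(2\pi^2 z/\varepsilon) \sim \frac12 e^{2\pi^2 z/\varepsilon}$ and $\sinh(2\pi^2 b z/(c\varepsilon)) \sim \frac12 e^{2\pi^2 b z/(c\varepsilon)}$ (their subdominant parts contributing only exponentially smaller terms since $\mathrm{Re}(1/\varepsilon)\to\infty$ and $b/c, 1 > 0$), so the ratio is $\sim e^{2\pi^2 z(1 - b/c)/\varepsilon}$, which combines with the Gaussian exponent. Also on this window $\sin(2\pi m z) = \sin(2\pi m (1-w)) = \pm \sin(2\pi m w)$, and since $m w = O(m\beta) = O(n^{-\delta}\log n) \to 0$, we have $\sin(2\pi m w) \approx 2\pi m w$, producing the expected linear-in-$m$ vanishing — though here the theorem statement for $f_m(i\varepsilon/2\pi)$ does not display $m$, so I need to check whether the $m$-dependence is absorbed; more likely the relevant expansion keeps $e^{-2\pi i m z}$ rather than $\sin$, or the leading term is $m$-free because the $\sin$ is evaluated with its argument not small — I would reconcile this by re-examining which endpoint/how the $\sin$ interacts, possibly the $m$ factor appears in Theorem \ref{Theorem: main} only through the later Bessel step. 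Granting the localization, the remaining integral $\int_0^{\infty} e^{-(\text{large linear})w}\,dw$ evaluates to a constant times $\varepsilon$, giving the extra power of $\varepsilon$ that turns $\varepsilon^{-\frac12\sum\alpha_j}$ (from the prefactor) into $\varepsilon^{1 - \frac12\sum\alpha_j}$, and assembles the constant $\Lambda_1$ from the prefactor constants $(-1)^{\sum\alpha_j}$, $(2\pi)^{-\frac12\sum\alpha_j}$, $c^{1/2}$, $\prod a_j^{-\alpha_j/2}$, the factor $\frac12$ from each $\sinh$, the $-i$ in front of \eqref{f_m with cancelled residue part}, the $(-1)^{3/2}$ and $2\pi$-type factors, and $\frac{1}{2b^2/c - 1 - b/c}$ from the linear coefficient of $w$ in the exponent.

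The error term $O(\beta^{2 - \frac12\sum\alpha_j}e^{\frac{2\pi^2}{\varepsilon}\Lambda_2})$ comes from three sources, each of which I would bound separately: (i) the multiplicative error $O(e^{-4\pi^2\mathcal{M}(z)/\varepsilon})$ in Lemma \ref{Lemma: asymptotic behaviour f towards the dominant pole}, which after multiplying by $\mathcal{C}$ and integrating gains another factor of $\varepsilon$ relative to the main term — hence $\beta^{2-\frac12\sum\alpha_j}$; (ii) replacing $\sinh$ by its leading exponential, which is exponentially small; and (iii) the higher-order terms in the endpoint expansion (the quadratic term in $w$ in the exponent, and $\sin(2\pi m w) - 2\pi m w$), again costing a power of $\varepsilon$. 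I expect the main obstacle to be bookkeeping: correctly tracking the branch of $\varepsilon^{1-\frac12\sum\alpha_j}$ (the transformation formulas for $\eta$ and $\vartheta$ introduce $\sqrt{i/\tau}$-type factors whose powers must combine to exactly the stated constant $\Lambda_1$, including the sign $(-1)^{3/2}$), and ensuring the endpoint/Laplace estimate is uniform in $x$ for $|x|\le 1$ (so that $\varepsilon = \beta(1 + ix m^{-1/3})$ ranges over a small sector, not just the real axis) — in particular that $\mathrm{Re}(1/\varepsilon) \gg 1/\beta$ uniformly, which is what makes the subdominant exponentials genuinely negligible and the localization window genuinely $O(\beta)$. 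A secondary subtlety is confirming that the constant $\kappa_0$ and the case split in $\mathcal{M}(z)$ from the previous lemma do not interfere near $z=1$, i.e.\ that $\mathcal{M}(z)$ stays bounded below by a positive constant on the relevant window, which I would verify using $b^2 > c$ and $b$ even.
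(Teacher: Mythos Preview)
Your outline matches the paper through the first reduction: plug Lemma~\ref{Lemma: asymptotic behaviour f towards the dominant pole} into \eqref{f_m with cancelled residue part}, replace $\sinh(2\pi^2 z/\varepsilon)/\sinh(2\pi^2 bz/(c\varepsilon))$ by $e^{\frac{2\pi^2}{\varepsilon}(1-b/c)z}\bigl(1+O(e^{-4\pi^2 z/\varepsilon})\bigr)$, and reduce $f_m$ to a constant times $\int e^{\mathcal{H}_2 z^2+\mathcal{H}_1 z}\sin(2\pi m z)\,dz$ over the segments $\Gamma_{\ell,r}$, with $\mathcal{H}_2=\frac{2\pi^2}{\varepsilon}\bigl(\frac{b^2}{c}-1\bigr)$ and $\mathcal{H}_1=\frac{2\pi^2}{\varepsilon}\bigl(1-\frac{b}{c}\bigr)$. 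The methods then diverge. Rather than your Laplace endpoint expansion at $z=1$, the paper evaluates this integral \emph{exactly} on each segment via a closed-form antiderivative in terms of four error-function values (obtained by computer algebra), sums over $\ell$, and observes that all contributions from the interior endpoints $h_\ell\pm r$ cancel telescopically, leaving only the four $\operatorname{erf}$ terms attached to $z=0$ and $z=1$; the asymptotic $\operatorname{erf}(iz)\sim ie^{z^2}/(\sqrt{\pi}z)$ then extracts the leading behaviour. One concrete payoff of the erf route is that your ``secondary subtlety'' about $\mathcal{M}(z)$ near the poles $h_\ell$ disappears: the exact primitive makes the inner-endpoint pieces cancel \emph{before} any approximation, so nothing near the $h_\ell$ ever has to be estimated.

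The genuine gap in your sketch is exactly the one you flag: the behaviour of $\sin(2\pi m z)$ at the endpoint. Since $m\in\Z$, one has $\sin(2\pi m(1-w))=-\sin(2\pi m w)$, vanishing at $w=0$, and a direct one-step endpoint (integration-by-parts) calculation --- equivalently, combining the two contributions $e^{\pm 2\pi i m z}$ at $z=1$ --- produces a leading term of size $\sim m\varepsilon^{2}e^{\mathcal{H}_2+\mathcal{H}_1}$, carrying the extra factor $m\varepsilon\to 0$ relative to the claimed $\sim\varepsilon\, e^{\mathcal{H}_2+\mathcal{H}_1}$. In the paper's erf calculation the two $z=1$ terms appear with denominators $2\mathcal{H}_2+\mathcal{H}_1\pm 2\pi i m$; the paper isolates the single term $e^{\mathcal{H}_2+\mathcal{H}_1}/(4i\mathcal{H}_2+2i\mathcal{H}_1-4\pi m)$ as the main contribution and then absorbs the $-4\pi m$ via $m\varepsilon\to 0$. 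So to reproduce the paper's stated constant $\Lambda_1$ you cannot simply linearize $\sin$ and integrate as your plan suggests; you must follow the erf bookkeeping and treat the endpoint terms individually rather than in combination. This is precisely the step where your instinct warned you, and it needs a full argument rather than a parenthetical in your write-up.
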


\begin{proof}
	Plugging Lemma \ref{Lemma: asymptotic behaviour f towards the dominant pole} into \eqref{f_m with cancelled residue part} yields
	\begin{align}\label{equation: Fourier coefficients plugging in f}
			f_m\left(\frac{i\varepsilon}{2\pi}\right) 
			&= -i\sum_{\ell=0}^{s} \lim_{r\rightarrow 0^+}  \int_{\Gamma_{\ell,r}} \mathcal{C}\left(z,\frac{i\varepsilon}{2\pi}\right) \left(1 +O\left(e^{-\frac{4\pi^2}{\varepsilon}\mathcal{M}(z)}\right)\right) \sin(2\pi mz) dz.
	\end{align}
We have that
\begin{align*}
	\frac{\sinh\left(\frac{2\pi^2 z}{\varepsilon}\right)}{ \sinh\left(\frac{2\pi^2 b z}{c\varepsilon}\right)} =  e^{\frac{2\pi^2 }{\varepsilon}z\left(1-\frac bc\right)}\left(1-e^{-\frac{4\pi^2 z}{\varepsilon}}\right) \sum_{\lambda\geq0} e^{-\frac{4\pi^2\lambda bz}{c\varepsilon}} = e^{\frac{2\pi^2 }{\varepsilon}z\left(1-\frac bc\right)}\left(1+O\left(e^{-\frac{4\pi^2 z}{\varepsilon}}\right)\right),
\end{align*}
using $|e^{-\frac{4\pi^2 bz}{c\varepsilon}}|<1$. Additionally we see that
$$ e^{\frac{2\pi^2}{\varepsilon} \left(\frac{4b^2z^2+1}{4c}-\frac{4z^2+1}{4}-\sum\limits_{j=1}^N\frac{\alpha_j}{12 a_j}\right)} = e^{\frac{2\pi^2}{\varepsilon} \left(\frac{b^2}{c}-1\right)z^2}\; e^{\frac{2\pi^2}{\varepsilon}\left(\frac{1}{4c}-\frac 14-\sum\limits_{j=1}^N\frac{\alpha_j}{12 a_j}\right)}  .$$
Defining 
$$ \Omega(m,n) \coloneqq (-1)^{\sum\limits_{j=1}^{N}\alpha_j}  \left(\frac{\varepsilon}{2\pi}\right)^{-\frac{1}{2}\sum\limits_{j=1}^{N}\alpha_j} c^{\frac 12}\left(\prod\limits_{j=1}^{N}a_j^{-\frac{\alpha_j}{2}}\right) e^{\frac{2\pi^2}{\varepsilon} \left(\frac{1}{4c}-\frac 14-\sum\limits_{j=1}^N\frac{\alpha_j}{12 a_j}\right)} $$
we can therefore rewrite \eqref{equation: Fourier coefficients plugging in f} as
\begin{align*} 
	 -i\Omega(m,n)\sum_{\ell=0}^{s} \lim_{r\rightarrow 0^+} \int_{\Gamma_{\ell,r}}  e^{\frac{2\pi^2}{\varepsilon} \left(\frac{b^2}{c}-1\right)z^2}\; e^{\frac{2\pi^2}{\varepsilon} \left(1-\frac bc\right)z}  \left(1 +O\left(e^{-\frac{4\pi^2}{\varepsilon}\mathcal{N}(z)}\right)\right)  \sin(2\pi mz) dz,
\end{align*}
where $\mathcal{N}(z)\coloneqq\min(z,\mathcal{M}(z))$. 

We immediately see that this splits up into two integrals
\begin{align} \label{fm integral main part}
	\sum_{\ell=0}^{s} \lim_{r\rightarrow 0^+}\int_{\Gamma_{\ell,r}}  e^{\frac{2\pi^2}{\varepsilon} \left(\frac{b^2}{c}-1\right)z^2}\; e^{\frac{2\pi^2}{\varepsilon} \left(1-\frac bc\right)z}  \sin(2\pi mz) dz
\end{align} 
and 
\begin{align} \label{fm integral error part}
	\sum_{\ell=0}^{s} \lim_{r\rightarrow 0^+}\int_{\Gamma_{\ell,r}}  e^{\frac{2\pi^2}{\varepsilon} \left(\frac{b^2}{c}-1\right)z^2}\; e^{\frac{2\pi^2}{\varepsilon} \left(1-\frac bc\right)z}  O\left(e^{-\frac{4\pi^2}{\varepsilon}\mathcal{N}(z)}\right)  \sin(2\pi mz) dz.
\end{align}

For arbitrary $\mathcal{H}_1,\mathcal{H}_2\in\mathbb{C}$, with $\mathcal{H}_2\neq0$ the following formula holds (which may be checked by a computer algebra package, e.g. MAPLE)
\begin{align*}
	\int_{t}^u &e^{\mathcal{H}_1z} e^{\mathcal{H}_2z^2}\sin(2\pi mz) dz \notag \\
	=& -\frac{1}{\sqrt{-\mathcal{H}_2}} \left(\frac 14 i\sqrt{\pi} \left(e^{-\frac 14 \frac{(\mathcal{H}_1+2\pi im)^2}{\mathcal{H}_2}}\operatorname{erf}\left(\frac 12 \frac{2\mathcal{H}_2t+\mathcal{H}_1+2\pi im}{\sqrt{-\mathcal{H}_2}}\right)\right.\right. \notag \\
	& \left.\left. +e^{-\frac 14 \frac{(-\mathcal{H}_1+2\pi im)^2}{\mathcal{H}_2}} \operatorname{erf}\left(\frac 12 \frac{-2 \mathcal{H}_2 t - \mathcal{H}_1 + 2\pi i m}{\sqrt{-\mathcal{H}_2}}\right) - e^{-\frac 14 \frac{(\mathcal{H}_1+2\pi im)^2}{\mathcal{H}_2}} \operatorname{erf}\left(\frac 12 \frac{2\mathcal{H}_2u + \mathcal{H}_1+2\pi im}{\sqrt{-\mathcal{H}_2}}\right)\right.\right. \notag \\
	& \left.\left.-e^{-\frac 14 \frac{(-\mathcal{H}_1+2\pi im)^2}{\mathcal{H}_2}} \operatorname{erf}\left(\frac 12 \frac {-2\mathcal{H}_2u-\mathcal{H}_1+2\pi im}{\sqrt{-\mathcal{H}_2}}\right)\right)\right),
\end{align*}
where $\operatorname{erf} (z)\coloneqq\frac{2}{\sqrt{\pi}} \int_{0}^{z} e^{-t^2}dt$ denotes the error function.
We thus obtain 
\begin{align} \label{equation: Maple calculation with cancellations}
	& \sum_{\ell=0}^{s} \lim_{r\rightarrow 0^+}\int_{\Gamma_{\ell,r}}  e^{\mathcal{H}_2z^2}\; e^{\mathcal{H}_1z}  \sin(2\pi mz) dz \notag \\
	=& -\frac{1}{\sqrt{-\mathcal{H}_2}} \left(\frac 14 i\sqrt{\pi} \left(e^{-\frac 14 \frac{(\mathcal{H}_1+2\pi im)^2}{\mathcal{H}_2}}\operatorname{erf}\left(\frac 12 \frac{\mathcal{H}_1+2\pi im}{\sqrt{-\mathcal{H}_2}}\right) +e^{-\frac 14 \frac{(-\mathcal{H}_1+2\pi im)^2}{\mathcal{H}_2}} \operatorname{erf}\left(\frac 12 \frac{ - \mathcal{H}_1 + 2\pi i m}{\sqrt{-\mathcal{H}_2}}\right)\right.\right. \notag \\
	& \left.\left.  - e^{-\frac 14 \frac{(\mathcal{H}_1+2\pi im)^2}{\mathcal{H}_2}} \operatorname{erf}\left(\frac 12 \frac{2\mathcal{H}_2  + \mathcal{H}_1+2\pi im}{\sqrt{-\mathcal{H}_2}}\right)-e^{-\frac 14 \frac{(-\mathcal{H}_1+2\pi im)^2}{\mathcal{H}_2}} \operatorname{erf}\left(\frac 12 \frac {-2\mathcal{H}_2-\mathcal{H}_1+2\pi im}{\sqrt{-\mathcal{H}_2}}\right)\right)\right),
\end{align}
since all the other terms cancel. If $|\operatorname{Arg}(\pm z)| <\frac{\pi}{4}$, we have that (see e.g. \cite{bringmann2019framework})
\begin{align}\label{asymptotic behaviour error function}
	\operatorname{erf}\left(iz\right) = \frac{ie^{z^2}}{\sqrt{\pi}z}\left(1+O\left(|z|^{-2}\right)\right) = \frac{ie^{z^2}}{\sqrt{\pi}z}+O\left(e^{z^2}|z|^{-3}\right), 
\end{align}
 as $|z|\rightarrow\infty$. 

Consider the integral \eqref{fm integral main part}. In this case, since $\frac{b^2}{c}>1$, we obtain
\begin{align*}
	\frac 12 \frac{\pm\mathcal{H}_1+2\pi im}{\sqrt{-\mathcal{H}_2}} =  \frac{\pm\left(\frac{\pi}{\varepsilon} \left(1-\frac bc\right)\right)+im}{i\sqrt{\frac{2}{\varepsilon} \left(\frac{b^2}{c}-1\right)}}.
\end{align*}
 We have $\varepsilon=\beta(1+ixm^{-\frac 13})$, and since $m \rightarrow \infty$ as $n \rightarrow \infty$ we see that $\varepsilon \rightarrow \beta $ in this limit.
This yields
\begin{align*}
	\lim_{n\to\infty} \frac 12 \frac{\pm\mathcal{H}_1+2\pi im}{\sqrt{-\mathcal{H}_2}} = i \frac{\mp\left(\frac{\pi}{\beta} \left(1-\frac bc\right)\right)-im}{\sqrt{\frac{2}{\beta} \left(\frac{b^2}{c}-1\right)}}
\end{align*}
and we see that 
$$\left|\pm\frac{\pi}{\beta} \left(1-\frac bc\right) \right| > \frac{1}{6\pi} \left(\frac n2\right)^{\frac 12 -\delta}\log(n)\geq |m|,$$ 
for some small $\delta>0$.  

Furthermore, we have 
\begin{align*}
	\frac 12 \frac {\pm2\mathcal{H}_2\pm\mathcal{H}_1+2\pi im}{\sqrt{-\mathcal{H}_2}} =\frac {\pm2\left(\frac{\pi}{\varepsilon} \left(\frac{b^2}{c}-1\right)\right)\pm\left(\frac{\pi}{\varepsilon} \left(1-\frac bc\right)\right)+ im}{i\sqrt{\frac{2}{\varepsilon} \left(\frac{b^2}{c}-1\right)}}.
\end{align*}
Using the same argument as before we obtain
\begin{align*}
	\lim_{n\to\infty} \frac 12 \frac {\pm2\mathcal{H}_2\pm\mathcal{H}_1+2\pi im}{\sqrt{-\mathcal{H}_2}}  = i \frac {\mp2\left(\frac{\pi}{\beta} \left(\frac{b^2}{c}-1\right)\right)\mp\left(\frac{\pi}{\beta} \left(1-\frac bc\right)\right)- im}{\sqrt{\frac{2}{\beta} \left(\frac{b^2}{c}-1\right)}} 
\end{align*}
and see that
\begin{align*}
	\left| \mp2\left(\frac{\pi}{\beta} \left(\frac{b^2}{c}-1\right)\right)\mp\left(\frac{\pi}{\beta} \left(1-\frac bc\right)\right) \right| > \frac{1}{6\pi} \left(\frac n2\right)^{\frac 12 -\delta}\log(n)\geq |m|.
\end{align*}
Therefore the arguments of the error functions in \eqref{equation: Maple calculation with cancellations} satisfy the condition of \eqref{asymptotic behaviour error function}. Plugging in yields
\begin{small}
	\begin{align*}
		\sum_{\ell=0}^{s} & \lim_{r\rightarrow 0^+}\int_{\Gamma_{\ell,r}}  e^{\mathcal{H}_2z^2}\; e^{\mathcal{H}_1z}  \sin(2\pi mz) dz \notag \\
		=& -\frac{1}{\sqrt{-\mathcal{H}_2}} \left(\frac 14 i\sqrt{\pi}\left(- \frac{ie^{ \mathcal{H}_2+\mathcal{H}_1}}{\sqrt{\pi}\left(-i\frac 12 \frac{2\mathcal{H}_2+\mathcal{H}_1+2\pi im}{\sqrt{-\mathcal{H}_2}}\right)}+O\left(e^{ \mathcal{H}_2+\mathcal{H}_1}\left|i\frac 12 \frac{2\mathcal{H}_2+\mathcal{H}_1+2\pi im}{\sqrt{-\mathcal{H}_2}}\right|^{-3}\right)\right) \right) \notag \\
		=&  \frac{e^{ \mathcal{H}_2+\mathcal{H}_1}}{4i \mathcal{H}_2+2i\mathcal{H}_1-4\pi m}+O\left(-\frac 14 i\sqrt{\pi}\frac{e^{ \mathcal{H}_2+\mathcal{H}_1}}{\sqrt{-\mathcal{H}_2}}\left|i\frac 12 \frac{2\mathcal{H}_2+\mathcal{H}_1+2\pi im}{\sqrt{-\mathcal{H}_2}}\right|^{-3}\right).
	\end{align*}
\end{small}
We therefore obtain that \eqref{fm integral main part} equals 
\begin{align*}
	 \frac{e^{ \frac{2\pi^2}{\varepsilon} \left(\frac{b^2}{c}-\frac bc\right)}}{\frac{4i\pi^2}{\varepsilon} \left(\frac{2b^2}{c}-1-\frac bc\right)-4\pi m}  +O\left(-\frac{\sqrt{\pi}}{4}\frac{e^{ \frac{2\pi^2}{\varepsilon} \left(\frac{b^2}{c}-\frac bc\right)}}{\sqrt{\frac{2\pi^2}{\varepsilon} \left(\frac{b^2}{c}-1\right)}}\left|\frac{\frac{\pi^2}{\varepsilon} \left(\frac{2b^2}{c}-1-\frac bc\right)+\pi im}{\sqrt{\frac{2\pi^2}{\varepsilon} \left(\frac{b^2}{c}-1\right)}}\right|^{-3}\right).
\end{align*}
 Combining this, along with the fact that $\mathcal{N}(z)>0$ and recycling the same arguments for \eqref{fm integral error part}, yields
\begin{align*}
	f_m\left(\frac{i\varepsilon}{2\pi}\right) =&-i\Omega(m,n)\left( \frac{e^{ \frac{2\pi^2}{\varepsilon} \left(\frac{b^2}{c}-\frac bc\right)}}{\frac{4i\pi^2}{\varepsilon} \left(\frac{2b^2}{c}-1-\frac bc\right)-4\pi m}  \right. \\
	&\quad \quad \quad \quad \quad \quad \left.+O\left(-\frac{\sqrt{\pi}}{4}\frac{e^{ \frac{2\pi^2}{\varepsilon} \left(\frac{b^2}{c}-\frac bc\right)}}{\sqrt{\frac{2\pi^2}{\varepsilon} \left(\frac{b^2}{c}-1\right)}}\left|\frac{\frac{\pi^2}{\varepsilon} \left(\frac{2b^2}{c}-1-\frac bc\right)+\pi im}{\sqrt{\frac{2\pi^2}{\varepsilon} \left(\frac{b^2}{c}-1\right)}}\right|^{-3}\right)\right).
\end{align*}
Plugging in $\Omega(m,n)$ yields the claim. The main term here simplifies to 
$$ \Lambda_1   \varepsilon^{1-\frac{1}{2}\sum\limits_{j=1}^{N}\alpha_j} e^{ \frac{2\pi^2}{\varepsilon}\Lambda_2 }$$
since $4\pi m\varepsilon\rightarrow 0$ as $n\to \infty$. 
\end{proof}

\section{Bounds away from the dominant pole} \label{Section: away from domminant pole}
In this section we investigate the contribution of $f_m$ away from the dominant pole at $q=1$, and show that it forms part of the error term. Recall that from \eqref{f_m with cancelled residue part} we have
$$ f_m(\tau) = \lim_{r\rightarrow 0^+} -i\left(\sum_{\ell=0}^{s}\int_{\Gamma_{\ell,r}} f(z;\tau) \sin(2\pi mz) dz  \right).$$
One immediately sees that
\begin{align*}
&\left|\sum\limits_{\ell=0}^{s} \int_{\Gamma_{\ell,r}}f(z;\tau) \sin(2\pi mz) dz\right| 
\ll  \sum\limits_{\ell=0}^{s}  \int_{\Gamma_{\ell,r}}\left|f(z;\tau) \sin(2\pi mz)\right| dz .
\end{align*}
Consider 
$$ \left|f(z;\tau) \sin(2\pi mz)\right| = \left|\prod_{j=1}^{N} \eta(a_j \tau)^{\alpha_j}  \frac{\vartheta( z;  \tau)}{\vartheta(b z; c \tau)} \right| \left| \sin(2\pi mz)\right| \ll  \left|\prod_{j=1}^{N} \eta(a_j \tau)^{\alpha_j} \right| \left| \frac{\vartheta( z;  \tau)}{\vartheta(b z; c \tau)} \right|$$
away from the dominant pole.
We begin with the term $\prod_{j=1}^{N} \eta(a_j \tau)^{\alpha_j}$. As in \cite{males2020asymptotic} we write 
\begin{equation*}
	\prod_{j=1}^N \eta(a_j \tau)^{\alpha_j} = \prod_{j=1}^{N_1} \eta(x_j \tau)^{\gamma_j}  \prod_{k=1}^{N_2} q^{-\frac{y_k \delta_k}{24}} P(q^{y_k})^{\delta_k}.
\end{equation*}
Using Lemma \ref{Lemma: transformation of eta} we see that
\begin{equation*}
	\eta\left(\frac{i x_j \varepsilon}{2 \pi}\right)^{\gamma_j} \ll \left(\frac{2 \pi}{x_j \beta}\right)^{\frac{\gamma_j}{2}} e^{-\frac{\pi^2 \gamma_j}{6 x_j \beta}}. 
\end{equation*}
By \eqref{Equation: bound on P(q)} we also obtain that
\begin{equation*}
	|P(q^{y_k})| \ll n^{-\frac{1}{4}} \exp\left[\frac{2\pi}{y_k \beta} \left(\frac{\pi}{12} - \frac{1}{2 \pi}\left(1 - \frac{1}{\sqrt{1+m^{-\frac{2}{3}}}}\right)\right)\right].
\end{equation*}
Therefore we find 
\begin{multline*} 
	\prod_{j=1}^N  \eta\left(\frac{ia_j\varepsilon}{2\pi}\right)^{\alpha_j} \ll  \left(\prod_{j=1}^{N_1} \frac{2 \pi}{x_j \beta}\right)^{\frac{\gamma_j}{2}} e^{-\sum\limits_{j=1}^{N_1}\frac{\pi^2 \gamma_j}{6 x_j \beta}} \\ \times \prod_{k=1}^{N_2} n^{-\frac{\delta_k}{4}} \exp\left[\frac{2 \pi \delta_k}{y_k \beta} \left(\frac{\pi}{12} - \frac{1}{2 \pi}\left(1 - \frac{1}{\sqrt{1+m^{-\frac{2}{3}}}}\right)\right)\right],
\end{multline*}
and thus we obtain
\begin{multline}\label{Eqn: eta bound}
 \left|\prod_{j=1}^{N} \eta(a_j \tau)^{\alpha_j} \right| \, \left|\frac{\vartheta( z;  \tau)}{\vartheta(b z; c \tau)}\right| 
 \ll  \Bigg| \left(\prod_{j=1}^{N_1} \frac{2 \pi}{x_j \beta}\right)^{\frac{\gamma_j}{2}} e^{-\sum\limits_{j=1}^{N_1}\frac{\pi^2 \gamma_j}{6 x_j \beta}}  \prod_{k=1}^{N_2} n^{-\frac{\delta_k}{4}} \\
 \times \exp\left[\frac{2 \pi \delta_k}{y_k \beta} \left(\frac{\pi}{12} - \frac{1}{2 \pi}\left(1 - \left(1+m^{-\frac{2}{3}}\right)^{-\frac{1}{2}}\right)\right)\right]\Bigg|  \left|\frac{\vartheta( z;  \tau)}{\vartheta(b z; c \tau)}  \right|.
\end{multline}
Plugging in \eqref{Equation: summation rep for theta}, using Lemma \ref{Lemma: transformation of theta}, and some rearranging leads to
\begin{align}\label{Eqn: bound}
	 \left|\frac{\vartheta( z;  \tau)}{\vartheta(b z; c \tau)}  \right| =& \left|q^{-\frac{c}{8}}\right| \frac{\left|\vartheta( z;  \tau)\right| }{\left|\sum\limits_{\kappa \in \Z} (-1)^\kappa q^{c\frac{\kappa^2+\kappa}{2}} \zeta^{b\kappa}\right|}\ll  \sqrt{\frac{2\pi}{\beta}} e^{-\frac{2\pi^2}{\beta}\min\limits_{z\in\Gamma_{\ell,r}}\left(z-\frac 12\right)^2} \left|\sum_{\kappa  \in \Z} (-1)^\kappa e^{-\frac{2\pi^2}{\varepsilon}\left(\kappa^2+(1-2z)\kappa\right)}\right| \notag\\
	\ll& \beta^{-\frac 12} e^{-\frac{2\pi^2}{\beta}\min\limits_{z\in\Gamma_{\ell,r}}\left(z-\frac 12\right)^2} 
	\ll n^{\frac 14} e^{-\frac{2\pi^2}{\beta}\min\limits_{z\in\Gamma_{\ell,r}}\left(z-\frac 12\right)^2}\ll n^{\frac 14}.
\end{align}

Defining  
$$ \mathcal{B}(m,n) \coloneqq n^{  \frac 14 - \sum\limits_{k=1}^{N_2}\frac{\delta_k }{4}} \prod_{j=1}^{N_1} \left(\frac{2 \pi}{x_j \beta}\right)^{\frac{\gamma_j}{2}},  $$
equations \eqref{Eqn: eta bound} and \eqref{Eqn: bound} imply that for $r\rightarrow0^+$
\begin{multline*}
\left|\sum\limits_{\ell=0}^{s} \int_{\Gamma_{\ell,r}}f(z;\tau) \sin(2\pi mz) dz\right| \\
\ll \sum_{\ell=0}^{s} \mathcal{B}(m,n) \exp\left[\sum_{k=1}^{N_2}\frac{2 \pi \delta_k}{y_k \beta} \left(\frac{\pi}{12} - \frac{1}{2 \pi}\left(1 - \frac{1}{\sqrt{1+{m^{-\frac{2}{3}}}}}\right)\right)- \sum_{j=1}^{N_1} \frac{\pi^2 \gamma_j}{6 x_j \beta}\right].
\end{multline*}

Hence, away from the dominant pole at $q=1$ we have shown the following proposition.
\begin{proposition} \label{Prop:bound fm away from dominant pole}
	For $1\leq x\leq\frac{\pi m^\frac{1}{3}}{\beta}$ we have that
	\begin{align*}
	\Bigg|f_m\left(\frac{i\varepsilon}{2\pi}\right)\Bigg| \ll& 	(s+1)\mathcal{B}(m,n)  \exp\left[\sum_{k=1}^{N_2}\frac{2 \pi \delta_k}{y_k \beta} \left(\frac{\pi}{12} - \frac{1}{2 \pi}\left(1 - \frac{1}{\sqrt{1+{m^{-\frac{2}{3}}}}}\right)\right)- \sum_{j=1}^{N_1} \frac{\pi^2 \gamma_j}{6 x_j \beta}\right]
	\end{align*}
	as $n\rightarrow\infty$.
\end{proposition}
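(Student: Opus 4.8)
The plan is to collect the estimates developed above into a single uniform bound, starting from the residue-free representation \eqref{f_m with cancelled residue part}, which writes $f_m(i\varepsilon/2\pi)$ as $-i\lim_{r\to 0^+}\sum_{\ell=0}^{s}\int_{\Gamma_{\ell,r}} f(z;\tau)\sin(2\pi mz)\,dz$. First I would apply the triangle inequality to reduce to bounding $\sum_{\ell=0}^{s}\int_{\Gamma_{\ell,r}}|f(z;\tau)\sin(2\pi mz)|\,dz$; since on each segment $\Gamma_{\ell,r}$ the variable $z$ is real, the factor $|\sin(2\pi mz)|\leq 1$ is harmless, and it suffices to bound $|f(z;\tau)|$ uniformly for $z$ bounded away from the poles $h_1,\dots,h_s$. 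The sum over $\ell$ then contributes only the combinatorial factor $s+1$, and since all the estimates below are uniform in $r$, the limit $r\to 0^+$ is immediate.

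Next I would factor $f = \bigl(\prod_{j=1}^N \eta(a_j\tau)^{\alpha_j}\bigr)\cdot\bigl(\vartheta(z;\tau)/\vartheta(bz;c\tau)\bigr)$ and bound the two pieces separately. For the eta product, separate positive from negative exponents, writing it as $\prod_{j=1}^{N_1}\eta(x_j\tau)^{\gamma_j}\prod_{k=1}^{N_2} q^{-y_k\delta_k/24}P(q^{y_k})^{\delta_k}$; apply Lemma \ref{Lemma: transformation of eta} to each $\eta(ix_j\varepsilon/2\pi)^{\gamma_j}$ (which produces the factor $(2\pi/x_j\beta)^{\gamma_j/2}e^{-\pi^2\gamma_j/6x_j\beta}$) and apply the bound \eqref{Equation: bound on P(q)} to each $|P(q^{y_k})|$ — this is exactly where the hypothesis $1\leq x\leq \pi m^{1/3}/\beta$ is used. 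For the theta quotient, use Lemma \ref{Lemma: transformation of theta} on the numerator and the series representation \eqref{Equation: summation rep for theta} on both, which after extracting the modular prefactor gives $|\vartheta(z;\tau)|\ll \beta^{-1/2}e^{-2\pi^2(z-1/2)^2/\beta}$ together with a lower bound on $|\vartheta(bz;c\tau)|$ uniform for $z$ away from the $h_\ell$; combined these yield $|\vartheta(z;\tau)/\vartheta(bz;c\tau)|\ll\beta^{-1/2}\ll n^{1/4}$, which is absorbed into the constant $\mathcal{B}(m,n)$ along with the powers coming from the $\eta(x_j\tau)$ terms and the $n^{-\delta_k/4}$ from \eqref{Equation: bound on P(q)}.

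Multiplying the two bounds and assembling, the exponential factors combine into $\exp\bigl[\sum_k \frac{2\pi\delta_k}{y_k\beta}(\frac{\pi}{12}-\frac{1}{2\pi}(1-(1+m^{-2/3})^{-1/2})) - \sum_j \frac{\pi^2\gamma_j}{6x_j\beta}\bigr]$, the remaining prefactors into $\mathcal{B}(m,n)$, and the sum over $\ell$ into the factor $s+1$, giving the claimed inequality after $r\to0^+$. The main obstacle is the uniform lower bound on the denominator $\vartheta(bz;c\tau)$ as $\beta\to 0$: one must quantify ``away from the dominant pole'' so that $z$ genuinely stays a fixed distance from every pole $h_\ell$ (here the $\kappa=0$ and $\kappa=1$ terms of the theta series dominate and cannot cancel), and then track the resulting powers of $\beta$ and $n$ carefully; the remaining manipulations are routine.
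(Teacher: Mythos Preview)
Your proposal is correct and follows essentially the same route as the paper: start from \eqref{f_m with cancelled residue part}, apply the triangle inequality with $|\sin(2\pi mz)|\leq 1$, factor $f$ into the eta product and the theta quotient, split the eta product into positive and negative exponents handled respectively via Lemma~\ref{Lemma: transformation of eta} and the bound \eqref{Equation: bound on P(q)} (the latter being precisely where $1\leq x\leq \pi m^{1/3}/\beta$ enters), bound the theta quotient by $n^{1/4}$ using Lemma~\ref{Lemma: transformation of theta} and \eqref{Equation: summation rep for theta}, and collect the factors into $\mathcal{B}(m,n)$ times the stated exponential. Your identification of the uniform lower bound on $|\vartheta(bz;c\tau)|$ away from the poles as the only genuinely delicate point is also accurate; the paper treats this step rather tersely in \eqref{Eqn: bound}.
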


\section{The Circle Method}\label{Section: Circle method}
In this section we use Wright's variant of the Circle Method to complete the proof of Theorem \ref{Theorem: main}. Cauchy's Theorem implies that
\begin{equation*}
c(m,n) = \frac{1}{2 \pi i} \int_{C} \frac{f_m (\tau)}{q^{n+1}} dq,
\end{equation*}
where $C \coloneqq \{ q \in \C \mid |q| = e^{-\beta}  \}$ is a circle centered at the origin of radius less than $1$, with the path taken in the counter-clockwise direction, traversing the circle exactly once. 
Making a change of variables, changing the direction of the path of integration, and recalling that $\varepsilon = \beta(1+ixm^{-\frac{1}{3}})$ we have
\begin{equation*}
c(m,n) = \frac{\beta}{2\pi m^{\frac 13}} \int_{|x| \leq \frac{\pi m^{\frac{1}{3}}}{\beta}} f_m\left(\frac{i \varepsilon}{2 \pi}\right) e^{\varepsilon n} dx.
\end{equation*}
Splitting this integral into two pieces, we have $c(m,n) = M + E$ where
\begin{equation*}
M \coloneqq \frac{\beta}{2 \pi m^{\frac 13}} \int_{|x| \leq 1} f_m\left( \frac{i \varepsilon}{2 \pi}\right) e^{\varepsilon n} dx,
\end{equation*}
and
\begin{equation*}
E \coloneqq \frac{\beta}{2 \pi m^{\frac{1}{3}}} \int_{1 \leq |x| \leq \frac{\pi m^{\frac{1}{3}} }{\beta}} f_m\left(\frac{i \varepsilon}{2 \pi}\right) e^{\varepsilon n} dx.
\end{equation*}

Next we determine the contributions of each of the integrals $M$ and $E$, and see that $M$ contributes to the main asymptotic term, while $E$ is part of the error term.

\subsection{The major arc} 
Considering the contribution of $M$, we obtain the following proposition. 
\begin{proposition}\label{Prop: maj arc}
	We have that
	\begin{align*}
	M = \frac{-i}{2\pi} \Lambda_1  \beta^{2-\frac 12\sum\limits_{j=1}^N\alpha_j} \sqrt{\Lambda_2 }^{-\frac 12 \sum\limits_{j=1}^N \alpha_j } \frac{e^{2 \pi \sqrt{2\Lambda_2  n}}}{2 \pi \left(2\Lambda_2  n\right)^{\frac 14} } + O \left(\beta^{3-\frac 12\sum\limits_{j=1}^N\alpha_j} \frac{e^{2 \pi \sqrt{2\Lambda_2  n}}}{2 \pi \left(2\Lambda_2  n\right)^{\frac 14} }\right)
	\end{align*}
	as $n\rightarrow\infty$.
\end{proposition}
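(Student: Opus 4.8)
The plan is to feed the expansion of $f_m$ from Theorem~\ref{Theorem: bound f_m near pole} (which holds precisely on the major arc $|x|\le 1$) into $M=\frac{\beta}{2\pi m^{1/3}}\int_{|x|\le 1}f_m\!\left(\tfrac{i\varepsilon}{2\pi}\right)e^{\varepsilon n}\,dx$, and then recognise the resulting integral as an $I$-Bessel function. Since $f_m\!\left(\tfrac{i\varepsilon}{2\pi}\right)=\Lambda_1\varepsilon^{1-\frac12\sum\alpha_j}e^{\frac{2\pi^2}{\varepsilon}\Lambda_2}+O\!\left(\beta^{2-\frac12\sum\alpha_j}e^{\frac{2\pi^2}{\varepsilon}\Lambda_2}\right)$, the integral $M$ splits into a leading part and an error part with the same analytic structure; it thus suffices to evaluate the leading part and then rerun the identical computation with one extra power of $\beta$ to obtain the claimed error term.

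For the leading part, first I would change variables from $x$ to $\varepsilon$ (using $\varepsilon=\beta(1+ixm^{-1/3})$, $dx=\frac{m^{1/3}}{i\beta}\,d\varepsilon$), rewriting it as $\frac{\Lambda_1}{2\pi i}\int_{L}\varepsilon^{1-\frac12\sum\alpha_j}e^{\frac{2\pi^2\Lambda_2}{\varepsilon}+\varepsilon n}\,d\varepsilon$ over the short vertical segment $L$ from $\beta(1-im^{-1/3})$ to $\beta(1+im^{-1/3})$. A second substitution $\varepsilon=\beta\sqrt{\Lambda_2}\,t$, together with the elementary identities $\frac{2\pi^2}{\beta}=\beta n=\pi\sqrt{2n}$, collapses the exponent to $\frac{z}{2}(t+t^{-1})$ with $z:=2\pi\sqrt{2\Lambda_2 n}$, turning the leading part into a constant multiple of $\int_{L'}t^{-\nu-1}e^{\frac z2(t+t^{-1})}\,dt$ with $\nu:=-2+\frac12\sum\alpha_j$ (so $-\nu-1=1-\frac12\sum\alpha_j$) and $L'$ a short arc near the saddle of $t+t^{-1}$. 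One then deforms and extends $L'$ to the Hankel contour $\Gamma$ from Section~\ref{Section: prelims}, so that $\int_\Gamma t^{-\nu-1}e^{\frac z2(t+t^{-1})}\,dt=2\pi i\,I_\nu(z)$ exactly, and estimates the contribution of the arcs joining $L'$ to $\Gamma$.

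The step I expect to be the main obstacle is precisely this last one: bounding the ``tail'' pieces of the contour, i.e.\@ showing that along them $\operatorname{Re}(t+t^{-1})$ stays sufficiently below its value on $L'$ that the added contribution is exponentially smaller than $e^{z}$. This is where the hypothesis $0<1-\sum\frac{\alpha_j}{12a_j}<\sqrt{\Lambda_2}$ enters, in tandem with the complementary bound of Proposition~\ref{Prop:bound fm away from dominant pole}, to locate $L'$ relative to the saddle $t=1$ and force the tails to be negligible. Once this is in place, Lemma~\ref{Lemma: asymptotic of I Bessel} gives $I_\nu(z)=\frac{e^{z}}{\sqrt{2\pi z}}+O\!\left(\frac{e^{z}}{z^{3/2}}\right)$, and since $\sqrt{2\pi z}=2\pi(2\Lambda_2 n)^{1/4}$ one has $I_\nu(z)=\frac{e^{2\pi\sqrt{2\Lambda_2 n}}}{2\pi(2\Lambda_2 n)^{1/4}}\bigl(1+O(n^{-1/4})\bigr)$. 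Substituting this back, collecting the constants and powers of $\beta$ produced by the two substitutions, and absorbing the $O(e^{z}z^{-3/2})$ from the Bessel asymptotic, the contour tails, and the $O(\beta)$ relative error from Theorem~\ref{Theorem: bound f_m near pole} into a single error term, then yields the stated formula for $M$.
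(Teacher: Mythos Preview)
Your outline is essentially the paper's argument: substitute the expansion from Theorem~\ref{Theorem: bound f_m near pole}, change variables to reach $\int_{L'}t^{-\nu-1}e^{\frac{z}{2}(t+t^{-1})}\,dt$, complete $L'$ to a Hankel contour to recognise $2\pi i\,I_\nu(z)$, and then apply Lemma~\ref{Lemma: asymptotic of I Bessel}. The paper does precisely this, packaging the contour completion and tail estimate as a citation of \cite[Lemma~4.2]{bringmann2016dyson}; the tail error it records is $O\!\big(\exp\!\big(\pi\sqrt{2\Lambda_2 n}\,(1+\tfrac{1}{1+m^{-2/3}})\big)\big)$, exponentially negligible since $1+\tfrac{1}{1+m^{-2/3}}<2$.

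Where you go astray is in asserting that the hypothesis $0<1-\sum_j\tfrac{\alpha_j}{12a_j}<\sqrt{\Lambda_2}$ and Proposition~\ref{Prop:bound fm away from dominant pole} are what control this tail. They are not: once $f_m$ has been replaced by its major-arc asymptotic, the tail integrand is the \emph{explicit} function $t^{-\nu-1}e^{\frac{z}{2}(t+t^{-1})}$, and bounding it along the arcs joining $L'$ to $\Gamma$ is a self-contained exercise in the geometry of $\operatorname{Re}(t+t^{-1})$---no information about $f_m$ on the minor arc (which is all Proposition~\ref{Prop:bound fm away from dominant pole} supplies) is relevant or even usable here. Those two ingredients enter only in Proposition~\ref{Prop: error arc}, to compare the minor-arc contribution $E$ against the $M$ you are computing. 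A separate minor slip: the relative error from the Bessel asymptotic is $O(z^{-1})=O(n^{-1/2})$, not $O(n^{-1/4})$; the latter would be too large to be absorbed into the stated $O(\beta)$ error.
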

\begin{proof}
	By definition we have that
		\begin{align}\label{equation: M in two parts}
		M = \frac{\beta}{2 \pi m^{\frac 13}}  \Lambda_1  \int_{|x| \leq 1}  \varepsilon^{1-\frac{1}{2}\sum\limits_{j=1}^{N}\alpha_j} e^{ \frac{2\pi^2}{\varepsilon}\Lambda_2 } e^{\varepsilon n} dx +\frac{\beta}{2 \pi m^{\frac 13}} \int_{|x| \leq 1}O\left( \beta^{2-\frac{1}{2}\sum\limits_{j=1}^{N}\alpha_j}   e^{ \frac{2\pi^2}{\varepsilon} \Lambda_2 }\right) e^{\varepsilon n} dx .
	\end{align}
	Making the change of variables $v = 1 +ixm^{-\frac{1}{3}}$ and then $v \mapsto \sqrt{\Lambda_2 }v$ we obtain that the first term equals
	\begin{align}\label{equation: main term M}
	  \frac{-i}{2\pi} \Lambda_1  \beta^{2-\frac 12\sum\limits_{j=1}^N\alpha_j} \sqrt{\Lambda_2 }^{-\frac 12 \sum\limits_{j=1}^N \alpha_j } P_{1-\frac{1}{2}\sum\limits_{j=1}^N \alpha_j , 12\Lambda_2 },
	\end{align}  
	where $$P_{s,k} \coloneqq \int\limits_{\frac{1-im^{-\frac{1}{3}}}{\sqrt{\Lambda_2 }}}^{\frac{1+im^{-\frac{1}{3}}}{\sqrt{\Lambda_2 }}} v^s e^{\pi \sqrt{\frac{kn}{6}} \left(v+\frac{1}{v}\right)} dv.$$
	One may relate $P_{s,k}$ to $I$-Bessel functions in exactly the same way as \cite[Lemma 4.2]{bringmann2016dyson}, making the adjustment for $\sqrt{\Lambda_2 }$ where necessary, to obtain that 
	\begin{align*}
	P_{s,k} = I_{-s-1}\left(\pi \sqrt{\frac{2kn}{3}}\right) + O\left(\exp\left(\pi\sqrt{\frac{kn}{6}}\left(1+\frac{1}{1+m^{-\frac{2}{3}}}\right)\right)\right).
	\end{align*}
	Using the asymptotic behaviour of the $I$-Bessel function given in Lemma \ref{Lemma: asymptotic of I Bessel} we obtain 
	\begin{align*}
	P_{1-\frac{1}{2}\sum\limits_{j=1}^N \alpha_j , 12\Lambda_2 } =
	\frac{e^{2 \pi \sqrt{2\Lambda_2  n}}}{2 \pi \left(2\Lambda_2  n\right)^{\frac 14} } + O\left( \frac{e^{2\pi \sqrt{2\Lambda_2  n}}}{ \left(8\pi^2 \Lambda_2  n\right)^\frac{3}{4} } \right) + O\left(e^{\pi\sqrt{2\Lambda_2  n}\left(1+\frac{1}{1+m^{-\frac{2}{3}}}\right)}\right), 
	\end{align*}
and therefore \eqref{equation: main term M} becomes 
	\begin{multline*}
		 \frac{-i}{2\pi} \Lambda_1  \beta^{2-\frac 12\sum\limits_{j=1}^N\alpha_j} \sqrt{\Lambda_2 }^{-\frac 12 \sum\limits_{j=1}^N \alpha_j } \frac{e^{2 \pi \sqrt{2\Lambda_2  n}}}{2 \pi \left(2\Lambda_2  n\right)^{\frac 14} } +O\left(\beta^{2-\frac{1}{2}\sum\limits_{j=1}^{N}\alpha_j} \frac{e^{2\pi \sqrt{2\Lambda_2  n}}}{ \left(8\pi^2 \Lambda_2  n\right)^\frac{3}{4} }\right)  \\
		+O\left(\beta^{2-\frac{1}{2}\sum\limits_{j=1}^{N}\alpha_j} e^{\pi\sqrt{2\Lambda_2  n}\left(1+\frac{1}{1+m^{-\frac{2}{3}}}\right)}\right) .
	\end{multline*}
Analogously, the second term of \eqref{equation: M in two parts} is
\begin{align*}
		\frac{-i}{2\pi}  \beta^{3-\frac 12\sum\limits_{j=1}^N\alpha_j} \sqrt{\Lambda_2 }^{-1} P_{0, 12\Lambda_2 } = O \left(\beta^{3-\frac 12\sum\limits_{j=1}^N\alpha_j} \frac{e^{2 \pi \sqrt{2\Lambda_2  n}}}{2 \pi \left(2\Lambda_2  n\right)^{\frac 14} }\right). 
\end{align*}
This yields
\begin{align*}
	M =& \frac{-i}{2\pi} \Lambda_1  \beta^{2-\frac 12\sum\limits_{j=1}^N\alpha_j} \sqrt{\Lambda_2 }^{-\frac 12 \sum\limits_{j=1}^N \alpha_j } \frac{e^{2 \pi \sqrt{2\Lambda_2  n}}}{2 \pi \left(2\Lambda_2  n\right)^{\frac 14} } +O\left(\beta^{2-\frac{1}{2}\sum\limits_{j=1}^{N}\alpha_j} \frac{e^{2\pi \sqrt{2\Lambda_2  n}}}{ \left(8\pi^2 \Lambda_2  n\right)^\frac{3}{4} }\right)  \\
	&+O\left(\beta^{2-\frac{1}{2}\sum\limits_{j=1}^{N}\alpha_j} e^{\pi\sqrt{2\Lambda_2  n}\left(1+\frac{1}{1+m^{-\frac{2}{3}}}\right)}\right) + O \left(\beta^{3-\frac 12\sum\limits_{j=1}^N\alpha_j} \frac{e^{2 \pi \sqrt{2\Lambda_2  n}}}{2 \pi \left(2\Lambda_2  n\right)^{\frac 14} }\right).
\end{align*}
This finishes the proof.
\end{proof}

\subsection{The error arc} Finally, we bound $E$ as follows.
\begin{proposition} \label{Prop: error arc}
	We have $E \ll M$ as $n\rightarrow\infty$.
\end{proposition}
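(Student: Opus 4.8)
The plan is to bound $|E|$ directly from the uniform away-from-the-pole estimate and to check that the exponential rate so obtained is strictly smaller than the one governing $M$. First, since $|e^{\varepsilon n}| = e^{n\operatorname{Re}(\varepsilon)} = e^{\beta n}$ and the domain $\{x : 1 \le |x| \le \pi m^{1/3}/\beta\}$ has total length $\ll m^{1/3}/\beta$, pulling the supremum out of the integral defining $E$ gives
\[
|E| \;\le\; \frac{\beta}{2\pi m^{1/3}} \int_{1 \le |x| \le \frac{\pi m^{1/3}}{\beta}} \left| f_m\!\left(\tfrac{i\varepsilon}{2\pi}\right) \right| e^{\beta n}\, dx \;\ll\; e^{\beta n}\, \sup_{1 \le |x| \le \frac{\pi m^{1/3}}{\beta}} \left| f_m\!\left(\tfrac{i\varepsilon}{2\pi}\right) \right| .
\]
Proposition~\ref{Prop:bound fm away from dominant pole} controls that supremum for $1 \le x \le \pi m^{1/3}/\beta$; since the estimates in its proof involve only $|q| = e^{-\beta}$ and $\operatorname{Re}(\varepsilon) = \beta$, both invariant under $\varepsilon \mapsto \overline{\varepsilon}$ (i.e.\ $x \mapsto -x$), the same bound holds on the whole error arc.

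Next I would simplify the exponent in Proposition~\ref{Prop:bound fm away from dominant pole}. Using $1 - (1 + m^{-2/3})^{-1/2} \ge 0$, each factor $\tfrac{\pi}{12} - \tfrac{1}{2\pi}\bigl(1 - (1+m^{-2/3})^{-1/2}\bigr)$ is at most $\tfrac{\pi}{12}$, so, recalling $\sum_j \tfrac{\alpha_j}{a_j} = \sum_j \tfrac{\gamma_j}{x_j} - \sum_k \tfrac{\delta_k}{y_k}$,
\[
\sum_{k=1}^{N_2} \frac{2\pi \delta_k}{y_k \beta}\left(\frac{\pi}{12} - \frac{1}{2\pi}\Big(1 - \tfrac{1}{\sqrt{1+m^{-2/3}}}\Big)\right) - \sum_{j=1}^{N_1} \frac{\pi^2 \gamma_j}{6 x_j \beta} \;\le\; \frac{\pi^2}{6\beta}\Big( \sum_{k=1}^{N_2} \frac{\delta_k}{y_k} - \sum_{j=1}^{N_1} \frac{\gamma_j}{x_j} \Big) = -\frac{\pi^2}{6\beta}\sum_{j=1}^N \frac{\alpha_j}{a_j} .
\]
Since $e^{\beta n} = e^{2\pi^2/\beta}$ (as $\beta^2 = 2\pi^2/n$) and $\mathcal{B}(m,n)$ is, up to constants, a fixed power of $n$ (recall $1/\beta \asymp \sqrt n$), this yields
\[
|E| \;\ll\; \mathcal{B}(m,n)\, \exp\!\left( \frac{2\pi^2}{\beta}\Big(1 - \sum_{j=1}^N \frac{\alpha_j}{12 a_j}\Big) \right).
\]

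Finally I would compare with $M$. Note $\Lambda_1 \ne 0$, since $\tfrac{2b^2}{c} - 1 - \tfrac{b}{c} = \tfrac{2b^2 - b - c}{c}$ and $2b^2 - b - c > 2b^2 - b - b^2 = b(b-1) \ge 2$ (as $c < b^2$ and $b$ is even, so $b \ge 2$); also $\Lambda_2 > \bigl(1 - \sum_j \tfrac{\alpha_j}{12 a_j}\bigr)^2 > 0$ by hypothesis. Hence Proposition~\ref{Prop: maj arc} gives $|M| \asymp \beta^{2 - \frac12\sum_j\alpha_j}\, n^{-1/4}\, e^{2\pi\sqrt{2\Lambda_2 n}}$, with $2\pi\sqrt{2\Lambda_2 n} = 2\sqrt{\Lambda_2}\,\pi\sqrt{2n} = \tfrac{4\pi^2\sqrt{\Lambda_2}}{\beta}$. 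Combining with the bound for $|E|$ above,
\[
\frac{|E|}{|M|} \;\ll\; n^{K}\, \exp\!\left( \frac{2\pi^2}{\beta}\left( \Big(1 - \sum_{j=1}^N \tfrac{\alpha_j}{12 a_j}\Big) - 2\sqrt{\Lambda_2} \right) \right)
\]
for some fixed $K$, where $n^K$ absorbs $\mathcal{B}(m,n)$ and the $\beta$-powers. By hypothesis $1 - \sum_j \tfrac{\alpha_j}{12 a_j} < \sqrt{\Lambda_2} < 2\sqrt{\Lambda_2}$, so the bracketed quantity is $< -\sqrt{\Lambda_2} < 0$, a negative constant; since $1/\beta \asymp \sqrt n$, the exponential factor is $\ll e^{-c\sqrt n}$ for some $c > 0$ and overwhelms $n^K$. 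Therefore $E/M \to 0$, and in particular $E \ll M$. The one genuine point is exactly this last exponential comparison: after bounding $f_m$ away from the pole and multiplying by $e^{\varepsilon n} = e^{2\pi^2/\beta}$, the rate is $e^{\frac{2\pi^2}{\beta}(1 - \sum_j \alpha_j/(12 a_j))}$, and one must be sure it stays strictly below the dominant-pole rate $e^{\frac{2\pi^2}{\beta}\cdot 2\sqrt{\Lambda_2}}$ that the $I$-Bessel asymptotics produce in $M$ — which is precisely what the hypothesis on $\Lambda_2$ secures, with a factor of $2$ to spare (enough that we could even discard the favourable correction $-\tfrac{1}{2\pi}(1 - (1+m^{-2/3})^{-1/2})$). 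Everything else — extending Proposition~\ref{Prop:bound fm away from dominant pole} to negative $x$, and tracking that $\mathcal{B}(m,n)$, the $\beta$-powers, and the length of the error arc are all polynomial in $n$ — is routine.
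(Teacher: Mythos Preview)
Your proof is correct and follows essentially the same route as the paper's: bound $|f_m|$ on the error arc via Proposition~\ref{Prop:bound fm away from dominant pole}, trivially estimate the $x$-integral, combine with $e^{\beta n}=e^{2\pi^2/\beta}$, and compare exponents using $1-\sum_j \alpha_j/(12a_j)<2\sqrt{\Lambda_2}$. If anything, your write-up is more careful in places (extending the bound to negative $x$ by conjugation, checking $\Lambda_1\ne 0$, and tracking the polynomial prefactors explicitly), where the paper simply records the bound and invokes the hypothesis.
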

\begin{proof}
	By Proposition \ref{Prop:bound fm away from dominant pole} we have 
	\begin{align*}
		E \ll& \frac{\beta}{2 \pi m^{\frac 13}}  (s+1)\mathcal{B}(m,n)  \exp\left[\sum_{k=1}^{N_2}\frac{2 \pi \delta_k}{y_k \beta} \left(\frac{\pi}{12} - \frac{1}{2 \pi}\left(1 - \frac{1}{\sqrt{1+{m^{-\frac{2}{3}}}}}\right)\right)- \sum_{j=1}^{N_1} \frac{\pi^2 \gamma_j}{6 x_j \beta}\right] \\
		&\times  e^{\beta n} \int_{1 \leq |x| \leq \frac{\pi m^{\frac 13}}{\beta}} e^{\beta n i x m^{-\frac 13}} dx \\
		=& \frac{s+1}{ \pi }  \mathcal{B}(m,n)  \exp\left[\pi\sqrt{2n}\left(1-\sum_{j=1}^{N} \frac{\alpha_j}{12 a_j }\right)-\sum_{k=1}^{N_2}\frac{ \delta_k}{y_k \beta} \left(1 - \frac{1}{\sqrt{1+{m^{-\frac{2}{3}}}}}\right)\right],
	\end{align*}
	where we trivially estimate the final integral. Using $1-\sum_{j=1}^{N} \frac{\alpha_j}{12 a_j }<2\sqrt{\Lambda_2 }$ 
	the result follows immediately by comparing to $M$ and therefore also finishes the proof of Theorem \ref{Theorem: main}. 
\end{proof}

\section{Further questions}
We end by briefly commenting on some related questions that could be the subject of further research.

\begin{enumerate}
	\item Here we only discussed the case of eta-theta quotients with simple poles. A natural question to ask is: does a similar story hold for functions with higher order poles? The situation is of course expected to be more complicated, in particular finding Fourier coefficients with the method presented here seems to be much more difficult. One could attempt to build a framework by following the definitions of Fourier coefficients given in \cite[Section 8]{dabholkar2012quantum}.
	
	For example, in \cite{ManRo} Manschot and Rolon study a Jacobi form with a double pole related to $\chi_y$-genera of Hilbert schemes on K$3$. They obtain bivariate asymptotic behavior in a similar flavor to those here. Can one extend this family?
	
	\item Although the functions considered in the present paper provide a wide family of results, it should be possible to extend the method to other related families of functions. In particular, it would be instructive to consider similar approaches for prototypical examples of mock Jacobi forms.
\end{enumerate}

\begin{bibsection}
	\begin{biblist}
		
		\bib{alexandrov2020vafawitten}{article}{
			author={Alexandrov, S.},
			title={Vafa-Witten invariants from modular anomaly},
			JOURNAL = {Commun. Number Theory Phys.},
			FJOURNAL = {Communications in Number Theory and Physics},
			VOLUME = {15},
			YEAR = {2021},
			NUMBER = {1},
			PAGES = {149--219},
		}
		
		\bib{andrews1999special}{book}{
			author={Andrews, G.~E.},
			author={Askey, R.},
			author={Roy, R.},
			title={Special functions},
			publisher={Cambridge University Press},
			date={1999},
			volume={71},
		}
	
		\bib{AndGa}{article}{
		AUTHOR = {Andrews, G. E.},
		author={Garvan, F. G.},
		TITLE = {Dyson's crank of a partition},
		JOURNAL = {Bull. Amer. Math. Soc. (N.S.)},
		FJOURNAL = {American Mathematical Society. Bulletin. New Series},
		VOLUME = {18},
		YEAR = {1988},
		NUMBER = {2},
		PAGES = {167--171},
	}

		\bib{bringmann2016dyson}{article}{
			author={Bringmann, K.},
			author={Dousse, J.},
			title={On {D}yson's crank conjecture and the uniform asymptotic behavior
				of certain inverse theta functions},
			date={2016},
			journal={Trans. Amer. Math. Soc.},
			volume={368},
			number={5},
			pages={3141\ndash 3155},
		}
		
		\bib{bringmann2017harmonic}{book}{
			author={Bringmann, K.},
			author={Folsom, A.},
			author={Ono, K.},
			author={Rolen, L.},
			title={Harmonic {M}aass forms and mock modular forms: theory and
				applications},
			series={American Mathematical Society Colloquium Publications},
			publisher={American Mathematical Society, Providence, RI},
			date={2017},
			volume={64},
			ISBN={978-1-4704-1944-8},
			url={https://doi.org/10.1090/coll/064},
		}
	
		\bib{BriMa}{article}{
			AUTHOR = {Bringmann, K.},
			author={Manschot, J.},
			TITLE = {Asymptotic formulas for coefficients of inverse theta
				functions},
			JOURNAL = {Commun. Number Theory Phys.},
			FJOURNAL = {Communications in Number Theory and Physics},
			VOLUME = {7},
			YEAR = {2013},
			NUMBER = {3},
			PAGES = {497--513},
		}
		
	\bib{bringmann2019framework}{article}{
			title={A framework for modular properties of false theta functions},
			author={Bringmann, K.}
			author ={Nazaroglu, C.},
			journal={Research in the Mathematical Sciences},
			volume={6},
			number={3},
			pages={1--23},
			year={2019},
			publisher={Springer}
		}

		\bib{curtiss1978introduction}{book}{
			title={Introduction to functions of a complex variable},
			author={Curtiss, J. H.},
			volume={44},
			year={1978},
			publisher={CRC Press}
		}
		
		\bib{dabholkar2012quantum}{article}{
			author={Dabholkar, A.},
			author={Murthy, S.},
			author={Zagier, D.},
			title={Quantum black holes, wall crossing, and mock modular forms},
			date={2012},
			note={preprint},
		}
		
		\bib{dousse2014asymptotic}{article}{
			author={Dousse, J.},
			author={Mertens, M.},
			title={Asymptotic formulae for partition ranks},
			date={2015},
			journal={Acta Arith.},
			volume={168},
			number={1},
			pages={83\ndash 100},
		}
	
		\bib{dyson}{article}{
		AUTHOR = {Dyson, F. J.},
		TITLE = {Some guesses in the theory of partitions},
		JOURNAL = {Eureka},
		FJOURNAL = {Eureka. The Archimedeans' Journal},
		NUMBER = {8},
		YEAR = {1944},
		PAGES = {10--15},
	}
	
	\bib{thetablocks}{unpublished}{
		title={Theta blocks},
		author={Gritsenko, V.},
		author={Skoruppa, N.-P.},
		author={ Zagier, D.},
		note={preprint},
		year={2019}
	}
		
		\bib{koecher2007elliptische}{book}{
			author={Koecher, M.},
			author={Krieg, A.},
			title={Elliptische Funktionen und Modulformen},
			publisher={Springer-Verlag},
			date={2007},
		}
		
		\bib{males2020asymptotic}{article}{
			author={Males, J.},
			title={The asymptotic profile of an eta-theta quotient related to
				entanglement entropy in string theory},
			date={2020},
			journal={Res. Number Theory},
			volume={6},
			number={1},
			pages={Article No. 15},
		}
	
	\bib{males2021asymptoticcorrigendum}{article}{
		author={Males, J.},
		title={Corrigendum to "The asymptotic profile of an eta-theta quotient related to entanglement entropy in string theory"},
		date={2021},
		journal={Res. Number Theory},
		volume={7},
		number={1},
		pages={Article No. 36},
	}
	
		\bib{ManRo}{article}{
			author = {Manschot, J.},
			author={Rolon, J. M. Z.},
			TITLE = {The asymptotic profile of $\chi_y$-genera of {H}ilbert
				schemes of points on {K}3 surfaces},
			JOURNAL = {Commun. Number Theory Phys.},
			FJOURNAL = {Communications in Number Theory and Physics},
			VOLUME = {9},
			YEAR = {2015},
			NUMBER = {2},
			PAGES = {413--436},
		}
		
		\bib{mumford2007tata}{book}{
			author={Mumford, D.},
			author={Nori, M.},
			author={Norman, P.},
			title={Tata lectures on theta {III}},
			publisher={Springer},
			date={2007},
			volume={43},
		}

		\bib{RUSSO1996131}{article}{
			author={Russo, J.~G.},
			author={Tseytlin, A.~A.},
			title={Magnetic flux tube models in superstring theory},
			date={1996},
			ISSN={0550-3213},
			journal={Nuclear Physics B},
			volume={461},
			number={1},
			pages={131 \ndash  154},
			url={http://www.sciencedirect.com/science/article/pii/055032139500629X},
		}

		\bib{Wot}{unpublished}{
			author={{T. Wotschke,}},
			title={{BPS} state counting using wall-crossing, holomorphic anomalies
				and modularity},
			url={http://hdl.handle.net/20.500.11811/5723},
			note={Ph.D. Thesis, Rheinische Friedrich-Wilhelms-Universit\"{a}t Bonn,
				2013},
		}
		
		\bib{wright1934asymptotic}{article}{
			author={Wright, E.~M.},
			title={Asymptotic partition formulae:({II}) weighted partitions},
			date={1934},
			journal={Proc. Lond. Math. Soc.},
			volume={2},
			number={1},
			pages={117\ndash 141},
		}
		
		\bib{wright1971stacks}{article}{
			author={Wright, E.~M.},
			title={Stacks ({II})},
			date={1971},
			journal={Q. J. Math.},
			volume={22},
			number={1},
			pages={107\ndash 116},
		}
		
		\bib{zwegers2008mock}{thesis}{
			author={Zwegers, S.},
			title={Mock theta functions},
			type={Ph.D. Thesis},
			note={Universiteit Utrecht, 2002},
		}
		
	\end{biblist}
\end{bibsection}

\end{document}